\documentclass[a4paper,12pt,reqno]{article}  % geht auch amsart, amsbook, article
\usepackage{graphicx}
\usepackage{amsmath}
\usepackage{amssymb}
\usepackage{enumerate}
\usepackage{amsthm}
\usepackage[T1]{fontenc}
\usepackage[latin1]{inputenc}
\usepackage[round]{natbib}
%\usepackage{showkeys}

%Satzumgebung

\newtheorem{theorem}{Theorem}
\newtheorem{proposition}[theorem]{Proposition}
\newtheorem{corollary}[theorem]{Corollary}
\newtheorem{lemma}[theorem]{Lemma}
\theoremstyle{remark}
\newtheorem{remark}[theorem]{Remark}
\newtheorem{example}[theorem]{Example}
\theoremstyle{definition}

\newcommand{\pr}[2]{P_{#1}\left[#2\right]}

\newcommand{\E}[2]{E_{#1}\left[#2\right]}

\newcommand{\abs}[1]{\left\vert#1\right\vert}

\newcommand{\set}[1]{\left\lbrace #1 \right\rbrace}

\providecommand{\norm}[1]{\left\lVert#1\right\rVert}

\textheight 8.5in
\oddsidemargin 0.25in
\evensidemargin 0.25in
\textwidth 6.25in

\begin{document}
\title{{\sc Survival probabilities of weighted random walk}}
\author{\renewcommand{\thefootnote}{\arabic{footnote}} {\sc Frank Aurzada}\footnotemark[1]\\
\renewcommand{\thefootnote}{\arabic{footnote}}{\sc Christoph Baumgarten}\footnotemark[1]}
\date{\today}

\footnotetext[1]{
Technische Universit\"at Berlin, Institut f\"ur Mathematik, Sekr.\ MA 7-4,
Stra{\ss}e des 17.\ Juni 136, 10623 Berlin, Germany,
{\sl aurzada@math.tu-berlin.de}, 
{\sl baumgart@math.tu-berlin.de}
}
\maketitle

\begin{abstract}
We study the asymptotic behaviour of the probability that a weighted sum of centered i.i.d.\ random variables $X_k$ does not exceed a constant barrier.\\
For regular random walks, the results follow easily from classical fluctuation theory, while this theory does not carry over to weighted random walks, where essentially nothing seems to be known.\\
First we discuss the case of a polynomial weight function and determine the rate of decay of the above probability for Gaussian $X_k$. This rate is shown to be universal over a larger class of distributions that obey suitable moment conditions.\\
Finally we discuss the case of an exponential weight function. The mentioned universality does not hold in this setup anymore so that the rate of decay has to be determined separately for different distributions of the $X_k$. We present some results in the Gaussian framework.
\end{abstract}

\section{Introduction}
\subsection{Statement of the problem}
In this article we study the asymptotic behaviour of
\begin{equation}\label{eq:one-sided-exit-problem}
   \pr{}{\sup_{0 \leq t \leq T} Z_t \leq 1}, \quad \text{or} \quad \pr{}{\sup_{n=1,\dots,N} Z_n \leq 0}, 
\end{equation}
as $T,N \to \infty$ for a certain class of stochastic processes $Z = (Z_t)_{t \geq 0}$ to be specified below. The probability above is often called survival probability up to time $T$ (also persistence probability). The problem of determining the asymptotic behaviour of \eqref{eq:one-sided-exit-problem} is sometimes also called one-sided exit problem since the survival probability can also be expressed using first hitting times. Typically, it cannot be computed explicitly. For most processes considered here, it decays polynomially with time (ignoring terms of lower order) , i.e.
\[
   \pr{}{\sup_{0 \leq t \leq T} Z_t \leq 1} = T^{-\theta + o(1)}, \quad T \to \infty,
\]
where $\theta$ is called the survival exponent.\\
Apart from pure theoretical interest in this classical problem, research on survival probabilities of integrated processes was motivated by the investigation of the inviscid Burgers equation, see e.g.\ \cite{sinai:1992, bertoin:1998, molchan-khokhlov:2004}. Further motivations are pursuit problems and a relation to questions about random polynomials; we refer to \cite{li-shao:2004} for a recent overview of applications. We mention that the problem of determining the survival exponent is relevant in various physical models such as reaction diffusion systems, granular media and Lotka-Volterra models for population dynamics, see the survey of \cite{majumdar:1999} with a collection of examples.\\
Although \eqref{eq:one-sided-exit-problem} is a classical problem, it has not been studied very intensively so far except for a few Gaussian processes and the case of processes having independent and stationary increments such as random walks and L\'{e}vy processes. The latter results are part of classical fluctuation theory. In the present article, we drop the assumption of stationary increments and study deterministically weighted sums of i.i.d.\ random variables. For such processes, there is virtually no theory available so far. \\
Our approach focusses on the analysis of the case of Gaussian increments first. Then universality results are shown by transferring the statement from Gaussian to more generally distributed increments.\\
The article is organized as follows. In Section~\ref{sec:mainres}, we introduce the class of processes in detail and summarize the main results. Some related work on survival probabilities is reviewed in Section~\ref{sec:related}. We discuss the exit problem for Gaussian weighted random walks in Section~\ref{sec:gaussian}. Here, the cases of a polynomially, a subexponentially, and an exponentially increasing weight function are considered in separate subsections. In Section~\ref{sec:weighted_rw}, the results of the Gaussian case for a polynomial weight function are extended to a broader class of weighted random walks whose increments obey certain moment conditions.\\
Finally, let us introduce some notation: If $f,g: \mathbb{R} \to \mathbb{R}$ are two functions, we write $f \precsim g$ if $\limsup_{x \to \infty} f(x)/g(x) < \infty$ and $f \asymp g$ if $f \precsim g$ and $g \precsim f$. Moreover, $f \sim g$ if $f(x)/g(x) \to 1$ as $x \to \infty$. 
\subsection{Main results} \label{sec:mainres}
We investigate the behaviour of survival probabilites of processes $Z=(Z_n)_{n \geq 1}$ defined by
\begin{equation}\label{def:weighted_RW}
   Z_n := \sum_{k=1}^n \sigma(k) X_k, \quad n \geq 1,
\end{equation}
where $X_1, X_2,\dots$ are i.i.d.\ random variables such that $\E{}{X_1} = 0$ and $\sigma \colon [0,\infty) \to (0, \infty)$ is a measurable function. We call $Z$ a weighted random walk with weight function $\sigma$. \\
Despite the obvious resemblance, the methods for computing the survival probability of (unweighted) random walks ($\sigma(n)\equiv 1$) do not carry over since they strongly rely upon the stationarity of increments that allows for an explicit computation of the generating function of the first hitting time of the set $(0,\infty)$.\\
Note that if the $X_k$ have a standard normal distribution, then the processes $(Z_n)_{n \geq 1}$ and $(B_{\kappa(n)})_{n \geq 1}$ have the same law where $\kappa(n) := \sigma(1)^2 + \dots + \sigma(n)^2$ and $B$ is a standard Brownian motion. Therefore, the computation for the weighted Gaussian random walk reduces to the case of Brownian motion evaluated at discrete time points. In this setup, we prove the following theorem.
\begin{theorem}\label{thm:overview_1}
Let $\kappa \colon [0,\infty) \to (0,\infty)$ be a measurable function such that $\kappa(N) \asymp N^q$ for some $q >0$. If there is some $\delta < q$ such that $\kappa(N+1) - \kappa(N) \precsim N^\delta$, then
\[
   \pr{}{\sup_{n=1,\dots,N} B_{\kappa(n)} \leq 0} = N^{-q/2 + o(1)}, \quad N \to \infty.
\]
\end{theorem}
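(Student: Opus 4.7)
The Gaussianity of the $X_k$ makes $(B_{\kappa(n)})_n$ a standard Brownian motion sampled at the deterministic times $\kappa(n)$, so the problem is equivalent to estimating the probability that a sampled Brownian motion stays below $0$. The plan is to sandwich this discrete survival probability between two continuous-time Brownian survival probabilities, which decay classically like $T^{-1/2}$.

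For the lower bound, I would first use the trivial inclusion $\{\sup_{t \in [\kappa(1), \kappa(N)]} B_t \leq 0\} \subseteq \{\sup_{n=1,\dots,N} B_{\kappa(n)} \leq 0\}$ to reduce to the classical Brownian computation. Conditioning on $B_{\kappa(1)}$ and applying the reflection principle to the increment process $B_{\kappa(1)+\cdot} - B_{\kappa(1)}$,
\[
\pr{}{\sup_{t \in [\kappa(1), \kappa(N)]} B_t \leq 0} = \E{}{\indic{B_{\kappa(1)} \leq 0}\bigl(2\Phi(|B_{\kappa(1)}|/\sqrt{\kappa(N)-\kappa(1)}) - 1\bigr)} \asymp \sqrt{\kappa(1)/\kappa(N)} \asymp N^{-q/2}.
\]

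For the upper bound, the natural strategy is to relate the discrete supremum to the continuous one, controlling Brownian fluctuations between grid points. Set $g_n := \kappa(n+1) - \kappa(n) \precsim N^\delta$. For a slow barrier $M_N$, define the fluctuation event $F := \bigcap_{n}\{\sup_{t \in [\kappa(n),\kappa(n+1)]}(B_t - B_{\kappa(n)}) \leq M_N\}$. The reflection principle and a union bound give $\pr{}{F^c} \leq 2N \exp(-M_N^2/(2CN^\delta))$, which is super-polynomially small once $M_N \succsim N^{\delta/2}\sqrt{\log N}$. On $F$, the discrete survival event implies $\sup_{t \in [\kappa(1),\kappa(N)]} B_t \leq M_N$, and by the same reflection computation the continuous probability with this barrier is $\asymp M_N/\sqrt{\kappa(N)}$, yielding
\[
\pr{}{\sup_n B_{\kappa(n)} \leq 0} \leq \pr{}{\sup_{t \in [\kappa(1),\kappa(N)]} B_t \leq M_N} + \pr{}{F^c}.
\]

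The main obstacle is that this naive choice only gives $\precsim N^{(\delta-q)/2 + o(1)}$, strictly weaker than the claimed $N^{-q/2 + o(1)}$ whenever $\delta > 0$. Closing this gap is the key technical step and requires exploiting that, \emph{under} the survival event, $|B_{\kappa(n)}|$ is typically of order $\sqrt{\kappa(n)} \asymp n^{q/2}$, far exceeding the fluctuation scale $\sqrt{g_n} \asymp n^{\delta/2}$. Concretely, the Brownian bridge formula gives $\pr{}{\sup_{t \in [\kappa(n),\kappa(n+1)]} B_t > 0 \mid B_{\kappa(n)}=-a, B_{\kappa(n+1)}=-b} = e^{-2ab/g_n}$, which for typical $a, b$ is $e^{-\Theta(n^{q-\delta})}$ and hence negligible. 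I would make this rigorous by decomposing the survival probability according to the magnitude of $B_{\kappa(N)}$ --- which, conditional on survival, concentrates near $-\sqrt{\kappa(N)}$ as predicted by the Rayleigh-type quasi-stationary distribution of the Lamperti-transformed Ornstein--Uhlenbeck process $X_s := e^{-s} B_{e^{2s}}$ --- and setting up a recursion for the survival probabilities under this rescaling. The spectral gap $1$ for the OU first-passage at $0$, together with the fact that the induced OU grid $s_n := \tfrac12 \log \kappa(n)$ has gaps $\precsim n^{\delta-q} \to 0$ (so the discrete OU survival is comparable to the continuous one up to polylogarithmic factors), should recover the sharp exponent $-q/2$ after absorbing these factors into the tolerated $N^{o(1)}$ term.
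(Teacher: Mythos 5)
Your lower bound is fine and is essentially the paper's: compare with the continuous-time event and use $\pr{}{\sup_{t\in[0,T]}B_t\leq 1}\asymp T^{-1/2}$ (the paper works with barrier $1$ and a separate remark to shift the barrier, but your direct computation with $\kappa(1)>0$ serves the same purpose). The problem is the upper bound. You correctly diagnose that a single barrier $M_N\asymp N^{\delta/2}\sqrt{\log N}$ only yields $N^{(\delta-q)/2+o(1)}$, and you correctly identify the heuristic that saves the day (under survival the path sits at depth $\asymp\sqrt{\kappa(n)}\asymp n^{q/2}$, dwarfing the fluctuation scale $n^{\delta/2}$). But the step you call ``the key technical step'' is never carried out: the assertion that, because the Ornstein--Uhlenbeck grid $s_n=\tfrac12\log\kappa(n)$ has gaps $\precsim n^{\delta-q}\to 0$, ``the discrete OU survival is comparable to the continuous one up to polylogarithmic factors'' is precisely the statement to be proved, not a consequence one may quote. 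The bridge estimate $e^{-2ab/g_n}$ is only useful once you control the conditional law of $B_{\kappa(n)}$ \emph{given the discrete survival event}, including the atypical paths that sit close to $0$ at grid times; invoking the Rayleigh/quasi-stationary profile here is circular, since that profile is itself a statement about the conditioned process whose survival probability you are trying to bound. The proposed ``recursion'' and the appeal to the OU spectral gap are a program, with no mechanism given for summing the crossing errors or for closing the recursion, so as written the upper bound has a genuine gap.

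For comparison, the paper closes exactly this gap without any conditioning argument, by using a \emph{deterministic moving boundary that grows in $n$}: fix $\gamma\in(\delta/2,q/2)$ and $g(N)=\lceil(K\log N)^{1/(2\gamma-\delta)}\rceil$, discard the first $g(N)$ grid points, and split the survival event according to whether some increment $\sup_{t\in[\kappa(n),\kappa(n+1)]}(B_t-B_{\kappa(n)})$ exceeds $n^\gamma$. The union bound over the exceedances is super-polynomially small (this is where $g(N)$ and $K$ enter), while on the complement the discrete event forces $B_t\leq 1+c\,t^{\gamma/q}$ on all of $[\kappa(g(N)),\kappa(N)]$. Slepian's inequality turns this into a ratio of boundary-crossing probabilities on $[0,\kappa(N)]$ and $[0,\kappa(g(N))]$, and Uchiyama's theorem says a boundary of order $t^{\alpha}$ with $\alpha=\gamma/q<1/2$ does not change the exponent $T^{-1/2}$, giving $\precsim N^{-q/2}(\log N)^{q/(4\gamma-2\delta)}$. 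If you want to salvage your route, you would need to prove a quantitative version of your comparability claim (e.g.\ by induction over grid points with explicit control of the mass of the conditioned law near $0$); alternatively, adopt the $n$-dependent boundary, which converts the whole difficulty into a known result on Brownian motion and moving boundaries.
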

The lower order term $N^{o(1)}$ can be specified more precisely (Theorem~\ref{thm:polynomial_case_BM}). In particular, we have under the assumptions of Theorem~\ref{thm:overview_1} that
\begin{equation}\label{eq:discr_and_cont_time}
   \pr{}{\sup_{n=1,\dots,N} B_{\kappa(n)} \leq 0} =  \pr{}{\sup_{t \in [1,\kappa(N)]} B_{t} \leq 0} \, N^{o(1)} = N^{-q/2 + o(1)}, \quad N \to \infty.
\end{equation}
In the Gaussian framework, the weight function $\sigma(n) = n^p$ corresponds to $\kappa(n) = \sum_{k=1}^n \sigma(k)^2 \asymp n^{2p+1}$ as remarked above. This implies that the survival exponent for the weighted Gaussian random walk $Z$ is equal to $\theta = p+1/2$.\\
In fact, we show that this survival exponent is universal over a much larger class of weighted random walks in case the $X_k$ are not necessarily Gaussian:
\begin{theorem}\label{thm:overview_2}
   Let $(X_k)_{k\geq 1}$ be a sequence of i.i.d.\ random variables with $\E{}{X_1} = 0$, $\E{}{X_1^2} > 0$ and $\E{}{e^{a \abs{X_1}}} < \infty$ for some $a > 0$. If $\sigma$ is increasing and $\sigma(N) \asymp N^{p}$, then for the weighted random walk $Z$ defined in \eqref{def:weighted_RW}, we have
\[
   \pr{}{\sup_{n=0,\dots,N} Z_n \leq 0} = N^{-(p +1/2) + o(1)}, \quad N \to \infty.
\]
\end{theorem}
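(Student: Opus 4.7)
The strategy is to transfer the Gaussian rate from Theorem~\ref{thm:overview_1} to general distributions via strong approximation. The exponential moment assumption allows one to invoke Sakhanenko's refinement of the Komlós--Major--Tusnády coupling: on a common probability space there exist i.i.d.\ standard normals $Y_1, Y_2, \dots$ such that, writing $S_n := \sum_{k=1}^n X_k$ and $T_n := \sum_{k=1}^n Y_k$, we have $\max_{n \le N} |S_n - T_n| \le C \log N$ outside an event of probability at most $N^{-\alpha}$ for any prescribed $\alpha$ (by taking $C$ large enough). Put $W_n := \sum_{k=1}^n \sigma(k) Y_k$, which has the same law as $B_{\kappa(n)}$ for $\kappa(n) = \sum_{k=1}^n \sigma(k)^2 \asymp n^{2p+1}$. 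Theorem~\ref{thm:overview_1} applied with $q = 2p+1$ then yields $\pr{}{\max_{n \le N} W_n \le 0} = N^{-(p+1/2) + o(1)}$.

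Abel summation turns the KMT control of $S_n - T_n$ into control of $Z_n - W_n$:
\[
   Z_n - W_n \;=\; \sigma(n)(S_n - T_n) \;-\; \sum_{k=1}^{n-1} \bigl(\sigma(k+1)-\sigma(k)\bigr)(S_k - T_k).
\]
Since $\sigma$ is increasing, $\sum_{k=1}^{n-1}(\sigma(k+1)-\sigma(k)) = \sigma(n) - \sigma(1) \asymp n^p$, so on the good coupling event $\max_{n \le N} |Z_n - W_n| \le C' N^p \log N =: \epsilon_N$. The sandwich
\[
   \pr{}{\max_{n \le N} W_n \le -\epsilon_N} - N^{-\alpha} \;\le\; \pr{}{\max_{n \le N} Z_n \le 0} \;\le\; \pr{}{\max_{n \le N} W_n \le \epsilon_N} + N^{-\alpha}
\]
follows, and choosing $\alpha > p + 1/2$ renders the $N^{-\alpha}$ corrections negligible compared with the target rate.

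What remains, and this is the main technical obstacle, is to show $\pr{}{\max_{n \le N} W_n \le \pm \epsilon_N} = N^{-(p+1/2) + o(1)}$, i.e.\ that a barrier shift by $\epsilon_N = N^{p + o(1)}$ does not affect the leading exponent. Morally this is clear because $\epsilon_N$ is a factor $N^{-1/2 + o(1)}$ smaller than the natural Gaussian scale $\sqrt{\kappa(N)} \asymp N^{p+1/2}$. For the lower bound on $\pr{}{\max W_n \le -\epsilon_N}$ one inserts a cheap initial phase on $n \le n_0 = N^{\eta}$ with $\eta$ slightly larger than $2p/(2p+1)$: the event $W_{n_0} \in [-2c\sqrt{\kappa(n_0)},\, -c\sqrt{\kappa(n_0)}]$ (with $c$ chosen so that $c\sqrt{\kappa(n_0)} \ge 2\epsilon_N$) has probability bounded below by a positive constant, and the Markov property reduces the remainder to a persistence problem for the restarted Gaussian walk with starting point much more negative than $-\epsilon_N$, to which Theorem~\ref{thm:overview_1} applies after absorbing the shift. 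For the upper bound on $\pr{}{\max W_n \le \epsilon_N}$, a block decomposition combined with the Markov property and Theorem~\ref{thm:overview_1} on each block absorbs the positive shift $\epsilon_N$ into the natural fluctuations, exploiting that the fresh clock $\kappa(N+m)-\kappa(N) \asymp N^{2p} m$ on the second half immediately dominates $\epsilon_N^2 = N^{2p + o(1)}$. In all likelihood, the sharper version of Theorem~\ref{thm:overview_1} stated as Theorem~\ref{thm:polynomial_case_BM} in the excerpt already delivers a rate that is uniform over barriers of polynomial order $N^{p + o(1)}$, making this barrier-shift step immediate; if not, it can be carried out separately along the lines just described.
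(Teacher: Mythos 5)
Your overall plan (strong approximation plus the Gaussian result) is the right family of ideas for the \emph{upper} bound, but the specific reduction you make is fatally lossy, and the lower bound does not work this way at all.

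The upper bound: after the Abel summation step you correctly obtain the $n$-dependent bound $\sup_{k \le n}\abs{Z_k - W_k} \le 2C\sigma(n)\log N$, but you then discard the $n$-dependence and replace it by the worst case $\epsilon_N = C'N^p\log N$. The resulting claim $\pr{}{\max_{n\le N} W_n \le \epsilon_N} = N^{-(p+1/2)+o(1)}$ is false for $p>0$: since $\pr{}{\max_{n\le N} B_{\kappa(n)} \le \epsilon_N} \ge \pr{}{\sup_{t\le \kappa(N)} B_t \le \epsilon_N} \asymp \epsilon_N/\sqrt{\kappa(N)} \asymp N^{-1/2}\log N$, no block decomposition can bring this down to $N^{-(p+1/2)}$. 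The heuristic comparison of $\epsilon_N$ with $\sqrt{\kappa(N)}$ misses that $\epsilon_N$ dominates $\sqrt{\kappa(n)} \asymp n^{p+1/2}$ for all $n \precsim N^{2p/(2p+1)}$, so the uniform shift voids all early constraints; likewise, Theorem~\ref{thm:polynomial_case_BM} is proved only for a constant barrier (Remark~\ref{rem:value_of_barrier}), not for barriers of order $N^{p+o(1)}$, so the "immediate" shortcut is not available. The repair is exactly to keep the bound $2C\sigma(n)\log N$ as a \emph{moving} barrier: in the Brownian clock $t=\kappa(n)\asymp n^{2p+1}$ it is of order $t^{p/(2p+1)}\log N$, with exponent $<1/2$, and Slepian's inequality combined with Uchiyama's moving-boundary result shows such a drift does not change the polynomial rate (up to a polylogarithmic factor); this is how the paper's Theorem~\ref{thm:weighted_rw_upper_bound} proceeds.

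The lower bound: the left side of your sandwich, $\pr{}{\max_{n\le N} W_n \le -\epsilon_N}$, is super-polynomially small, since already $\set{W_1 \le -\epsilon_N}$ costs $\exp(-c\,\epsilon_N^2)$; even with the correct $n$-dependent barrier $-2C\sigma(n)\log N$ the constraint at bounded $n$ costs $\exp(-c(\log N)^2)=N^{-c\log N}$. Your initial-phase construction bounds the wrong event: conditioning $W_{n_0}$ to lie in a negative window does not remove the constraints $W_n \le -\epsilon_N$ for $n<n_0$ that your sandwich imposes, and these are exactly the prohibitive ones. This is why the paper does not use a coupling for the lower bound: it uses a Skorokhod embedding $Z_n = B_{\tau(n)}$, bounds the survival probability from below by $\pr{}{\sup_{t\le (1+\epsilon)\kappa(N)} B_t \le 1} - \pr{}{\tau(N)-\kappa(N) > \epsilon c_1 N^{2p+1}}$, and controls the second term by moment estimates (BDG and Marcinkiewicz--Zygmund) for the stopping times; no barrier shift appears, and only polynomial moments are needed (Theorem~\ref{thm:weighted_rw_lower_bound}). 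As written, your proposal therefore has genuine gaps in both directions.
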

The proof of the lower bound for the survival probability in Theorem~\ref{thm:overview_2} under weaker assumptions (Theorem~\ref{thm:weighted_rw_lower_bound}) is based on the Skorokhod embedding. The upper bound (Theorem~\ref{thm:weighted_rw_upper_bound}) is established using a coupling of \cite{k-m-t:1976}. In either case, the problem is reduced to finding the survival exponent for Gaussian increments, i.e.\ to the case treated in Theorem~\ref{thm:overview_1}.

As noted in (\ref{eq:discr_and_cont_time}), Theorem~\ref{thm:overview_1} shows that it does not matter for the asymptotic behaviour of the survival exponent whether one samples the Brownian motion at the discrete points $(\kappa(n))_{n\geq 1}$ or over the corresponding interval if $\kappa$ increases polynomially. This result can be generalized to functions of the type $\kappa(n)=\exp(n^\alpha)$, $n \geq 0$, at least for $\alpha<1/4$ (Theorem~\ref{thm:subexp}). This fact turns out to be wrong however for the case $\alpha=1$ in general. Namely, if we consider an exponential function $\kappa(n) = \exp(\beta n)$ for $n\geq 0$ and some $\beta > 0$, it follows from Slepian's inequality in the Gaussian case that
\[
   \lim_{N \to \infty} -\frac{1}{N} \log \pr{}{\sup_{n=0,\dots,N} B\left(e^{\beta n}\right) \leq 0} =: \lambda_\beta
\]
exits for every $\beta>0$, and that $\beta\mapsto\lambda_\beta$ is increasing. However, one has
\[
   \lambda_\beta < \beta/2 = \lim_{N \to \infty} -\frac{1}{N} \log \pr{}{\sup_{t \in [0,N]} B\left(e^{\beta t}\right) \leq 0}
\]
at least for $\beta > 2 \log 2$ showing that the rates of decay in the discrete and continuous time framework do not coincide in contrast to \eqref{eq:discr_and_cont_time}. Additionally, the rate of decay of the survival probability for an exponentially weighted random walk now depends on the distribution of the $X_k$ even under exponential moment conditions, that is, a universality result similar to the polynomial case found in Theorem~\ref{thm:overview_2} does not hold.\\
In the Gaussian case, we state upper and lower bounds on the rate of decay in Theorem~\ref{thm:summary_exp} and characterize $\lambda_\beta$ as an eigenvalue of a certain integral operator in Proposition~\ref{prop:int_eq}. Unfortunately, an explicit computation of $\lambda_\beta$ does not seem to be possible easily.
\subsection{Related work}  \label{sec:related}
Let us briefly summarize some important known results on survival probabilities. For Brownian motion, the survival exponent is easily seen to be $\theta = 1/2$ by the reflection principle. The probability that a Brownian motion does not hit a moving boundary has also been studied in various cases. In this article, we will use some results of this type of \cite{uchiyama:1980}. \\
As mentioned in the introduction, for processes with independent and stationary increments, the problem can be solved using classical fluctuation theory. In particular, it has been shown that $\theta = 1/2$ for any random walk $S$ with centered increments and finite variance (see e.g.\ \cite{feller-vol2-1970}, Chapter XII, XIII and XVIII). In fact, the generating function of the first hitting time of the set $(0,\infty)$ can be computed explicitly in terms of the probabilities $\pr{}{S_n > 0}$ (Theorem XII.7.1 of \cite{feller-vol2-1970}). Similar results can be deduced for L\'evy processes, see e.g.\ \cite{doney} (p.\ 33); \cite{bertoin:1996}.\\ %\cite{grzywny-ryznar:2008}; \cite{molchan:2000} (for relativistic $\alpha$-stable processes).\\
Apart from these facts, little is known outside the Gaussian framework. It has been shown that the survival exponent of integrated Brownian motion is $\theta = 1/4$ (\cite{ mckean:1963, goldman:1971, isozaki-watanabe:1994}). In fact, this is true for a much larger class of integrated L\'{e}vy processes and random walks, see  \cite{sinai:1992, aurzada-dereich:2009, vysotsky:2010,dembo-gao:2011}. For results on integrated stable L\'{e}vy processes, we refer to \cite{simon:2007}.\\
\cite{slepian:1962} studied survival probabilities for stationary Gaussian processes and obtained some general upper and lower bounds for their survival exponent. An important inequality (Slepian's inequality) is established that will be a very important tool throughout this work. It is also applied frequently in the work of \cite{li-shao:2004}, where universal upper and lower bounds for certain classes of Gaussian processes are derived. We further mention the works \cite{molchan:1999a} and \cite{molchan:1999}, where the survival exponent for fractional Brownian motion (FBM) is computed.
\section{The Gaussian case}\label{sec:gaussian}
Let $(X_n)_{n \geq 1}$ denote a sequence of independent standard normal random variables and let $B = (B_t)_{t \geq 0}$ denote a standard Brownian motion. For a measurable function $\sigma \colon [0,\infty) \to (0,\infty)$, let $Z$ be the corresponding weighted random walk defined in \eqref{def:weighted_RW}. Note that 
\begin{equation} \label{eq:eq_distr_gaussian_wrw}
   (Z_n)_{n \geq 1} \stackrel{d}{=} (B_{\kappa(n)})_{n \geq 1}, \qquad \kappa(n) := \sum_{k=1}^n \sigma(k)^2.
\end{equation}
The problem therefore amounts to determining the asymptotics of
\begin{equation} \label{eq:surv_prob_kappa}
   \pr{}{\sup_{n=1,\dots,N} B_{\kappa(n)} \leq 0}.
\end{equation}
Intuitively speaking, if $B_{\kappa(1)} \leq 0, \dots, B_{\kappa(N)} \leq 0$, then typically $B_{\kappa(N-1)}$ and $B_{\kappa(N)}$ are quite far away from the point $0$ if $N$ is large. One therefore expects that also $B_t \leq 0$ for $t \in [\kappa(N-1),\kappa(N)]$ unless the difference $\kappa(N) - \kappa(N-1)$ is so large that the Brownian motion has enough time to cross the $x$-axis with sufficiently high probability in the meantime. So if $\kappa(N) - \kappa(N-1)$ does not grow too fast, one would expect that the probability in \eqref{eq:surv_prob_kappa} behaves asymptotically just as in the case where the supremum is taken continuously over the corresponding interval (modulo terms of lower order). In the proof of Theorem~\ref{thm:polynomial_case_BM} and \ref{thm:subexp}, this idea will be made explicit in a slightly different way: we will require that the Brownian motion stays below a moving boundary on the intervals $[\kappa(N-1),\kappa(N)]$ where the moving boundary increases sufficiently slowly compared to $\kappa(N)$ in order to leave the survival exponent unchanged. We therefore split our results as follows: In Section \ref{sec:gauss_poly}, we consider polynomial functions $\kappa(N) = N^q$ for $q > 0$ (so $\kappa(N) - \kappa(N-1) \asymp N^{q-1}$). In Section \ref{sec:subexp}, we discuss the subexponential case $\kappa(N) = \exp(N^\alpha)$ for $0 < \alpha < 1$ (here  $\kappa(N) - \kappa(N-1) \asymp \kappa(N) N^{\alpha - 1}$) before finally turning to the exponential case $\kappa(N) = \exp(\beta N)$ for $\beta >0$ (now $ \kappa(N) - \kappa(N-1) \asymp \kappa(N)$) in Section \ref{sec:gaussian_exp}. \\
\begin{remark}\label{rem:value_of_barrier}
% In the statement of Theorem~\ref{thm:polynomial_case_BM} and \ref{thm:subexp}, the value $0$ of the barrier can be replaced by any $c \in \mathbb{R}$ without changing the result. In fact, the proofs become easier if we use the barrier $1$. For the upper bound, only a minor change in the proofs is necessary. For the lower bound, let $a = \inf \set{\kappa(n) : n \in \mathbb{N}, \kappa(n) > 0} > 0$, $\tilde{\kappa}(n) = \max \set{\kappa(k) : k=1,\dots,n}$ and note that
%\begin{align*}
% \pr{}{\sup_{n=1,\dots,N} B_{\kappa(n)} \leq c} &\geq \pr{}{\sup_{t \in [a,\tilde{\kappa}(N)]} B_t \leq c} \\
%&\geq \pr{}{B_a \leq c-1} \, \pr{}{\sup_{t \in [a,\tilde{\kappa}(N)]} B_t - B_a \leq 1}.
In the statement of Theorem~\ref{thm:polynomial_case_BM} and \ref{thm:subexp}, the value $0$ of the barrier can be replaced by any $c \in \mathbb{R}$ without changing the result. Indeed, let $\kappa \colon [0,\infty) \to (0,\infty)$ be such that $\kappa(N) \to \infty$ as $N \to \infty$ and let $a = \inf \set{\kappa(n) : n \in \mathbb{N}} > 0$. Note that for $c,d \in \mathbb{R}$, it holds that
\begin{align*}
 \pr{}{\sup_{n=1,\dots,N} B_{\kappa(n)} \leq c} &\geq \pr{}{B_{a/2} \leq c - d, \sup_{n=1,\dots,N} B_{\kappa(n)} - B_{a/2} \leq d} \\
&= \pr{}{B_{a/2} \leq c-d} \, \pr{}{\sup_{n=1,\dots,N} B_{\kappa(n) - a/2} \leq d} > 0.
\end{align*}
Now $\tilde{\kappa}(n) := \kappa(n) - a/2 > 0$ satisfies the same growth conditions as $\kappa$ stated in all theorems. Hence, it suffices to prove Theorem~\ref{thm:polynomial_case_BM} and \ref{thm:subexp} for the barrier $1$.
\end{remark}
\subsection{Polynomial case}\label{sec:gauss_poly}
The first result is a slightly more precise version of Theorem~\ref{thm:overview_1}.
\begin{theorem}\label{thm:polynomial_case_BM}
Let $\kappa \colon [0,\infty) \to (0,\infty)$ be a measurable function such that for some $q > 0$ and $\delta <q$
\begin{equation}
   \kappa(N) \asymp N^q \qquad \text{and } \qquad \kappa(N) - \kappa(N-1) \precsim N^{\delta}, \quad N \to \infty.
\end{equation}
Then for any $\gamma \in (\delta/2,q/2)$
\[
   N^{-q/2} \precsim \pr{}{\sup_{n=1,\dots,N} B_{\kappa(n)} \leq 0} \precsim N^{-q/2} (\log N)^{q/(4 \gamma - 2\delta)}, \quad N \to \infty.
\]
\end{theorem}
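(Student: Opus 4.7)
The lower bound follows from the trivial comparison
\[
\pr{}{\sup_{n=1,\dots,N} B_{\kappa(n)} \leq 0} \geq \pr{}{\sup_{t \in [\kappa(1),\kappa(N)]} B_t \leq 0}.
\]
Conditioning on $B_{\kappa(1)}$ and applying the reflection principle to the post-$\kappa(1)$ Brownian motion yields a closed-form integral that decays like $\kappa(N)^{-1/2} \asymp N^{-q/2}$.

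For the upper bound I implement the heuristic outlined before the theorem: replace the discrete event by a continuous-time event below a slowly growing moving boundary, paying for atypical fluctuations of $B$ on the intervals $[\kappa(n-1),\kappa(n)]$ by a direct Gaussian tail bound. Fix $\gamma \in (\delta/2,q/2)$, let $c>0$ be a constant to be chosen later, and set $g(n) = c\, n^{\gamma}\sqrt{\log N}$. Define
\[
A_n = \Bigl\{\sup_{t \in [\kappa(n-1),\kappa(n)]}(B_t - B_{\kappa(n-1)}) \leq g(n)\Bigr\}, \qquad A = \bigcap_{n=2}^N A_n.
\]
On $\{\sup_n B_{\kappa(n)} \leq 0\} \cap A$ one has $B_t \leq g(n)$ for every $t \in [\kappa(n-1),\kappa(n)]$, so $B_t \leq \phi(t)$ where $\phi$ is the step function equal to $g(n)$ on $[\kappa(n-1),\kappa(n)]$; note $\phi(t) \precsim t^{\gamma/q}\sqrt{\log N}$. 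A union bound then gives
\[
\pr{}{\sup_n B_{\kappa(n)} \leq 0} \leq \pr{}{A^c} + \pr{}{\sup_{t \in [\kappa(1),\kappa(N)]}(B_t - \phi(t)) \leq 0}.
\]

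The fluctuation term is controlled by the reflection principle: since the variance of the increment is at most $\kappa(n)-\kappa(n-1) \precsim n^{\delta}$ and $2\gamma - \delta > 0$,
\[
\pr{}{A_n^c} \precsim \exp\Bigl(-\tfrac{1}{2}c^2 n^{2\gamma-\delta}\log N\Bigr) = N^{-c^2 n^{2\gamma-\delta}/2},
\]
so the sum over $n \leq N$ is dominated by the $n=1$ term, and choosing $c$ with $c^2 > q$ makes $\pr{}{A^c} = o(N^{-q/2})$. For the continuous-time term, the Brownian scaling $\tilde B_s = B_{\lambda s}/\sqrt{\lambda}$ with $\lambda = L^{2/(1-2\alpha)}$, $L \asymp \sqrt{\log N}$, and $\alpha = \gamma/q < 1/2$ reduces the problem (after majorising $\phi$ by a smooth power $L t^\alpha$) to estimating the survival of $\tilde B$ below the fixed boundary $s^{\alpha}$ on an interval of length $\asymp N^q/(\log N)^{q/(q-2\gamma)}$. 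The moving-boundary results of \cite{uchiyama:1980} then give this probability of order $(N^q/(\log N)^{q/(q-2\gamma)})^{-1/2} \asymp N^{-q/2}(\log N)^{q/(2(q-2\gamma))}$. Since $\gamma \mapsto (q+\delta)/2 - \gamma$ is a bijection of the parameter range $(\delta/2,q/2)$ onto itself and maps $q/(2(q-2\gamma))$ to $q/(4\gamma - 2\delta)$, relabelling produces the bound as stated.

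The principal technical obstacle is the application of Uchiyama's theorem uniformly in the prefactor $L = c\sqrt{\log N}$, which diverges with $N$: the dependence of the implicit constants on $L$ must be tracked so that the scaling argument produces exactly the exponent $q/(2(q-2\gamma))$ of $\log N$ rather than a worse polylogarithmic loss, and the irregular step boundary $\phi$ must be majorised by a smooth power with a sharp constant. A secondary issue is that $\kappa(N)-\kappa(N-1) \precsim N^{\delta}$ controls only upward increments, not monotonicity of $\kappa$; however, this does not affect the comparisons above provided one restricts to those $n$ with $\kappa(n) \geq \kappa(1)$, which costs only finitely many terms.
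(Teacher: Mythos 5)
Your upper-bound strategy is a genuine variant of the paper's: both arguments split the discrete event into (i) a continuous-time event that $B$ stays below a slowly growing boundary and (ii) a fluctuation event on the blocks $[\kappa(n-1),\kappa(n)]$ handled by Gaussian tails, and both finish with Slepian/Uchiyama. The difference is where the logarithm is paid: the paper keeps the barrier $n^\gamma$ but discards the indices $n\le g(N)\asymp(\log N)^{1/(2\gamma-\delta)}$, the factor $(\log N)^{q/(4\gamma-2\delta)}$ coming from the Slepian ratio $\kappa(g(N))^{1/2}/\kappa(N)^{1/2}$; you keep all indices, inflate the barrier by $\sqrt{\log N}$, and scale the prefactor away, obtaining the exponent $q/(2(q-2\gamma))$. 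Your observation that the reparametrization $\gamma\mapsto(q+\delta)/2-\gamma$ maps this family of bounds onto the stated one is correct, and your lower bound is fine (the paper argues the same way, modulo its remark on shifting the barrier). Minor quantitative points (the implied constant in $\kappa(n)-\kappa(n-1)\precsim n^\delta$ entering the exponent of $\pr{}{A_n^c}$, and the union bound over $n\le N$) only change ``choose $c^2>q$'' into ``choose $c$ large enough''.

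Two steps, however, are genuinely incomplete. First, monotonicity: the hypotheses bound only the \emph{upward} increments, and $\kappa(N)\asymp N^q$ still permits downward jumps of order $N^q$ (e.g.\ $\kappa$ alternating between $c_1n^q$ and $c_2n^q$ with $c_1<c_2$). For such $n$ the segment joining $\kappa(n-1)$ and $\kappa(n)$ has length of order $n^q$, so your tail bound for $\pr{}{A_n^c}$ (which uses variance $\precsim n^\delta$) fails, and the step function $\phi$ is no longer a valid majorant on all of $[\kappa(1),\kappa(N)]$. Your proposed fix --- restricting to $n$ with $\kappa(n)\ge\kappa(1)$ --- does not touch this problem. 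The correct reduction is the paper's: replace $\kappa$ by its running maximum $\tilde\kappa(n)=\max_{l\le n}\kappa(l)$, note that the survival probability only increases, and check that $\tilde\kappa$ satisfies the same growth conditions.

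Second, the citation of Uchiyama. After your scaling the event is $\set{\tilde B_s\le s^{\alpha}\ \forall s\in[s_0,S]}$ with $s_0=\kappa(1)/\lambda>0$, whereas Theorem~5.1 of Uchiyama concerns boundaries of the form $1+cs^{\alpha}$ on the full interval $[0,S]$. You cannot simply impose the missing constraint on $[0,s_0]$, since that shrinks the event; some argument is needed to remove the startup interval. The standard repair is exactly the Slepian step used in the paper: with $F(s)=1+s^{\alpha}$, bound the probability by $\pr{}{\sup_{s\in[0,S]}(B_s-F(s))\le 0}/\pr{}{\sup_{s\in[0,s_0]}(B_s-F(s))\le 0}$, whose denominator is bounded below by $\pr{}{\sup_{s\in[0,1]}B_s\le 1}>0$ and whose numerator is $\precsim S^{-1/2}$ by Uchiyama. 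By contrast, the issue you single out as the principal obstacle --- uniformity of Uchiyama's constants in the prefactor $L\asymp\sqrt{\log N}$ --- is a non-issue precisely because your scaling turns the boundary into the fixed function $s^{\alpha}$, free of $N$; the real uniformity to check is the (harmless) one in the left endpoint $s_0$ just described.
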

\begin{proof}
By assumption, there are constants $c_1,c_2 >0$ such that $c_1 n^q \leq \kappa(n) \leq c_2 n^q$ for $n$ large enough. The constant $c_2$ may be chosen so large that the second inequality holds for all $n \geq 1$. The lower bound is then easily established by comparison to the continuous time case if the barrier $0$ is replaced by $1$: 
\[
   \pr{}{\sup_{n=1,\dots,N} B_{\kappa(n)} \leq 1} \geq \pr{}{\sup_{t \in [0,c_2 N^q]} B_t \leq 1} \asymp N^{-q/2}.
\]
This also implies the same asymptotic order of the lower bound for any other barrier, see Remark~\ref{rem:value_of_barrier}.\\
For the proof of the upper bound, we will assume without loss of generality that $\kappa$ is nondecreasing. Otherwise, consider the continuous nondecreasing function $\tilde{\kappa}$ with $\tilde{\kappa}(n) = \max \set{ \kappa(l) : l = 0,\dots, n }$ for $n \in \mathbb{N}$ and $\tilde{\kappa}$ linear on $[n,n+1]$ for all $n \in \mathbb{N}$. Then $\tilde{\kappa}(N) \asymp N^q$ as $N \to \infty$. Moreover, $\tilde{\kappa}(N) - \tilde{\kappa}(N-1) = 0$ if $\kappa(N) \leq \tilde{\kappa}(N-1)$ and for $\kappa(N) > \tilde{\kappa}(N-1)$, we have
\[
  \tilde{\kappa}(N) - \tilde{\kappa}(N-1) = \kappa(N) - \tilde{\kappa}(N-1) \leq \kappa(N) - \kappa(N-1). 
\]
Thus, $\tilde{\kappa}$ satisfies the same growth conditions as $\kappa$. Clearly, for all $N$,
\[
  \pr{}{\sup_{n=1,\dots,N} B_{\kappa(n)} \leq 1} \leq \pr{}{\sup_{n=1,\dots,N} B_{\tilde{\kappa}(n)} \leq 1},
\]
so it suffices to prove the assertion of the theorem for a nondecreasing function $\kappa$. \\
Choose any $\gamma$ such that $\delta/2 < \gamma < q/2$ and set $g(N) = \lceil (K \cdot \log N)^{\frac{1}{2\gamma - \delta}} \rceil$ for some $K$ (which has to be chosen large enough later on). Next, note that
\begin{align*}
	\bigcap_{n=g(N)}^N \set{ B_{\kappa(n)} \leq 1 } &\subseteq \bigcap_{n=g(N)}^{N-1} \set{ \sup_{t \in [\kappa(n),\kappa(n+1)]} B_t \leq n^\gamma + 1 } \\
	&\quad \cup \bigcup_{n=g(N)}^{N-1} \set{ \sup_{t \in [\kappa(n),\kappa(n+1)]} B_t - B_{\kappa(n)} > n^\gamma } =: G_N \cup H_N.
\end{align*}
Clearly, it holds that
\[
	\pr{}{\sup_{n=1,\dots,N} B_{\kappa(n)} \leq 1} \leq \pr{}{\sup_{n= g(N),\dots,N} B_{\kappa(n)} \leq 1} \leq \pr{}{G_N} + \pr{}{H_N}.
\]
Next, note that $\kappa(n) \leq t$ implies that $n \leq (t/c_1)^{1/q}$ for $n$ sufficiently large. Using also that $\kappa(\cdot)$ is nondecreasing, we obtain that
\begin{align*}
	\pr{}{G_N} &\leq \pr{}{
	\bigcap_{n=g(N)}^{N-1} \set{ \sup_{t \in [\kappa(n),\kappa(n+1)]} B_t - (t/c_1)^{\gamma/q} \leq 1 } } \\
	&= \pr{}{
 	\sup_{t \in [\kappa(g(N)),\kappa(N)]} B_t - (t/c_1)^{\gamma/q} \leq 1 } =: p_1(N).	
\end{align*}
Moreover, using the stationarity of increments and the scaling property of Brownian motion, we obtain the following estimates:
\begin{align*}
	\pr{}{H_N} &\leq \sum_{n=g(N)}^{N-1} \pr{}{\sup_{t \in [0,\kappa(n+1) - \kappa(n)]} B_t > n^\gamma} \\
 	&=\sum_{n=g(N)}^{N-1} \pr{}{\sup_{t \in [0,1]} B_t > \frac{n^\gamma}{\sqrt{\kappa(n+1) - \kappa(n)}}} =:  p_2(N).	
\end{align*}
 Let us first show that the second term $p_2$ decays faster than $N^{-q/2}$ as $N \to \infty$. To this end, let $c$ denote a constant such that $\kappa(n+1) - \kappa(n) \leq c \, n^{\delta}$ for all $n$ sufficiently large. In particular, for $N$ large enough,
\begin{align*}
p_2(N) &\leq N \, \max_{n=g(N),\dots,N}  \pr{}{\sup_{t \in [0,1]} B_t > c^{-1/2} n^{\gamma - \delta/2} }\\
&= N \, \pr{}{\sup_{t \in [0,1]} B_t > c^{-1/2} g(N)^{\gamma - \delta/2} },
\end{align*}
 since $\gamma$ was chosen such that $\gamma - \delta/2 > 0$. Next, recalling that 
\begin{equation}\label{eq:tails_of_BM}
 \pr{}{\sup_{t \in [0,1]} B_t > u} = \pr{}{\abs{B_1} >u} = \sqrt{\frac{2}{\pi}} \, \int_u^\infty {e^{-x^2/2}} \,dx \leq e^{-u^2/2}, \quad u \geq 0,
\end{equation}
we may finally conclude that
\begin{align*}
p_2(N) \leq N \exp\left(-\frac{g(N)^{2\gamma - \delta}}{2 c}\right) \leq N^{1 - \frac{K}{2 \, c}}. 
\end{align*}
By choosing $K$ large enough, the assertion that $p_2$ decreases faster than $N^{-q/2}$ is verified.\\
It remains to show that $p_1(N) \precsim \,N^{-q/2} \, (\log N)^{q/(4\gamma - 2 \delta)}$. Let
\[
 F(t) :=
	\begin{cases}
 		c_1^{-\gamma/q} t^{\gamma / q}, &\quad t \geq c_1,\\
		1, &\quad t < c_1.
	\end{cases}
\]
Clearly we have for $N$ large enough that
\begin{align*}
 p_1(N) &= \pr{}{\sup_{t \in  [\kappa(g(N)),\kappa(N)]} B_t - c_1^{-\gamma/q}t^{\gamma/q} \leq 1} \\
&= \pr{}{\sup_{t \in  [\kappa(g(N)),\kappa(N)]} B_t - F(t) \leq 1}.
\end{align*}
Since $\E{}{B_s B_t} \geq 0$ for all $s,t \geq 0$, Slepian's inequality (cf.\ Theorem~3 of \cite{slepian:1962}) implies that
\begin{align*}	
	&\pr{}{\sup_{t \in  [\kappa(g(N)),\kappa(N)]}B_t - F(t) \leq 1} \leq 
	\frac{\pr{}{\sup_{t \in  [0,\kappa(N)]} B_t - F(t) \leq 1}}{\pr{}{\sup_{t \in  [0,\kappa(g(N))]} B_t - F(t) \leq 1}}.
\end{align*}
One has to determine the probability that a Brownian motion does not hit the moving boundary $1+F(\cdot)$. Now 
\[
   \pr{}{B_t \leq c \, t^\alpha + 1, \, \forall t \in [0,T]} \asymp \pr{}{\sup_{0 \leq t \leq T} B_t \leq 1} \asymp T^{-1/2}, \quad T \to \infty
\]
if $\alpha < 1/2$ and $c>0$ by Theorem~5.1 of \cite{uchiyama:1980}, i.e.\ adding a drift of order $t^\alpha$ ($\alpha < 1/2$) to a Brownian motion does not change the rate $T^{-1/2}$. Since $\gamma/q <1/2$, this implies for the boundary $1 + F(\cdot)$ that
\begin{align*}   
&\frac{\pr{}{\sup_{t \in  [0,\kappa(N)]} B_t - F(t) \leq 1}}{\pr{}{\sup_{t \in  [0,\kappa(g(N))]} B_t - F(t) \leq 1}} \asymp \frac{\pr{}{\sup_{t \in  [0,\kappa(N)]}B_t \leq 1}}{\pr{}{ \sup_{t \in  [0,\kappa(g(N))]} B_t \leq 1}}\\
&\quad \asymp  \kappa(g(N))^{1/2} \, \kappa(N)^{-1/2} \asymp  (\log N)^{q/(4\gamma - 2\delta)} \, N^{-q/2}.
\end{align*}
\end{proof}
\begin{remark}
   The assertion of the proposition above becomes false if we remove the condition $\kappa(N+1) - \kappa(N) \precsim N^\delta$ for some $\delta < q$. Indeed, let $q>0$ and for $n \in \mathbb{N}$, set $\kappa(n) = \exp(q k)$ if $e^k \leq n < e^{k+1}$ for $k \in \mathbb{N}$. Then $\kappa(N) \asymp N^q$ as $N \to \infty$. Moreover, $\kappa(N+1) - \kappa(N) = 0$ if there is $k \in \mathbb{N}$ such that $N, N+1 \in [e^k,e^{k+1})$ and 
\[
   \kappa(N+1) - \kappa(N) = \exp(q (k+1)) - \exp(q k) = \kappa(N) (e^q - 1) 
\]
for $k \in \mathbb{N}$ such that $e^k \leq N < e^{k+1} \leq N+1$. In particular, $\kappa(N+1) - \kappa(N) \precsim N^q$. Next, note that 
\begin{align*}
   & \pr{}{\sup_{n=1,\dots,N} B_{\kappa(n)} \leq 0} = \pr{}{B(e^{qk}) \leq 0, k \in \mathbb{N}, e^k \leq N} \\
&\quad= \pr{}{\bigcap_{k=1}^{\lfloor \log N \rfloor} \set{B(e^{qk}) \leq 0}} \geq \prod_{k=1}^{\lfloor \log N \rfloor} \pr{}{B(e^{qk}) \leq 0} \geq (1/2)^{\log N} = N^{-\log 2}. 
\end{align*}
The first inequality holds by Slepian's inequality (see also \eqref{eq:one_half_to_N}). Hence, $N^{-q/2}$ cannot be an upper bound for the survival probability if $q > 2 \log 2$.
\end{remark}
\subsection{Subexponential case}\label{sec:subexp}
Here we consider functions $\kappa(\cdot)$ that grow faster than any polynomial but slower than any exponential function, i.e.
\[
   \lim_{N \to \infty} \frac{N^q}{\kappa(N)} = 0, \quad q >0, \qquad \lim_{N \to \infty} \frac{\kappa(N)}{e^{\beta N}} = 0, \quad \beta >0.
\]
For simplicity, we restrict our attention to the natural choice $\kappa(n) \asymp \exp(\nu \, n^\alpha)$ for $\nu>0, \alpha \in (0,1)$. Under certain additional assumptions the proof of Theorem~\ref{thm:polynomial_case_BM} can be adapted to yield the following result:
\begin{theorem}\label{thm:subexp}
Let $\kappa \colon [0,\infty) \to (0,\infty)$ be a measurable function such that 
\[
    \kappa(N) \asymp \exp(\nu \, N^\alpha), \qquad \kappa(N+1) - \kappa(N) \precsim \kappa(N) \, N^{-\gamma}, \quad N \to \infty,
\]
where $\alpha, \nu > 0$ and $\gamma > 3 \alpha$. Then
\[
   \lim_{N \to \infty} N^{-\alpha} \, \log \pr{}{\sup_{n=1,\dots,N} B_{\kappa(n)} \leq 0} = -\nu/2.
\]
More precisely, for $\Lambda := \alpha/(\gamma - 2\alpha) < 1$, one has
\[
   \exp \left( - \frac{\nu}{2} \, N^{\alpha} \right) \precsim \pr{}{\sup_{n=1,\dots,N} B_{\kappa(n)} \leq 0} \precsim \exp \left( - \frac{\nu}{2} \, N^{\alpha} \right) \cdot \exp \left( N^{\Lambda \alpha + o(1)}  \right).
\]
\end{theorem}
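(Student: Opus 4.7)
The plan is to adapt the event-decomposition strategy of the proof of Theorem~\ref{thm:polynomial_case_BM}, adjusting the moving boundary $f(n)$ and the cutoff $g(N)$ to the subexponential scale of $\kappa$. The lower bound is immediate: by Remark~\ref{rem:value_of_barrier} we may replace the barrier $0$ by $1$, and comparison with continuous time then gives $\pr{}{\sup_{n\leq N} B_{\kappa(n)}\leq 1}\geq \pr{}{\sup_{t\in[0,\kappa(N)]} B_t\leq 1}\asymp \kappa(N)^{-1/2}\asymp \exp(-\nu N^\alpha/2)$.

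For the upper bound I would assume (as in Theorem~\ref{thm:polynomial_case_BM}) that $\kappa$ is nondecreasing and use the inclusion $\bigcap_{n=g(N)}^N \set{B_{\kappa(n)}\leq 1}\subseteq G_N\cup H_N$ with
\[
   G_N:=\bigcap_{n=g(N)}^{N-1}\set{\sup_{t\in[\kappa(n),\kappa(n+1)]} B_t\leq 1+f(n)},
\]
\[
   H_N:=\bigcup_{n=g(N)}^{N-1}\set{\sup_{t\in[\kappa(n),\kappa(n+1)]}(B_t-B_{\kappa(n)})>f(n)}.
\]
The crucial new feature is that a polynomial boundary $f(n)=n^\eta$ no longer controls $\pr{}{H_N}$: since $\kappa(n+1)-\kappa(n)\asymp \kappa(n)n^{-\gamma}$ grows faster than any polynomial, the Brownian-increment tail $\pr{}{\sup_{[0,\kappa(n+1)-\kappa(n)]} B_t>n^\eta}$ does not tend to $0$. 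The right choice is to take $f(n)$ comparable to the standard deviation of the increment:
\[
   f(n):=\nu\,n^{-\alpha}\sqrt{\kappa(n)},\qquad g(N):=\lceil C N^\Lambda\rceil,\qquad \Lambda=\frac{\alpha}{\gamma-2\alpha},
\]
for a constant $C$ to be chosen large. Applying $\pr{}{|B_1|>u}\leq e^{-u^2/2}$ together with the hypothesis $\kappa(n+1)-\kappa(n)\leq c\kappa(n)n^{-\gamma}$ then yields $\pr{}{\sup_{[0,\kappa(n+1)-\kappa(n)]} B_t>f(n)}\leq \exp(-n^{\gamma-2\alpha}/(2c))$, so $\pr{}{H_N}\leq N\exp(-C^{\gamma-2\alpha}N^\alpha/(2c))$ (using $\Lambda(\gamma-2\alpha)=\alpha$), which is smaller than $\exp(-\nu N^\alpha/2)$ once $C$ is large enough.

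The harder step is bounding $\pr{}{G_N}$. I would use the Slepian ratio from the polynomial proof to get
\[
   \pr{}{G_N}\leq \frac{\pr{}{\sup_{t\in[0,\kappa(N)]}(B_t-F(t))\leq 1}}{\pr{}{\sup_{t\in[0,\kappa(g(N))]}(B_t-F(t))\leq 1}}
\]
for any nondecreasing continuous interpolant $F$ with $F(\kappa(n))\geq f(n)$; the natural choice is $F(t):=c\sqrt{t}/\log t$ (with $F$ taken constant on a small initial interval) for a suitable $c$ depending on $\nu$. The denominator is bounded below by $\kappa(g(N))^{-1/2}$, since $F\geq 0$ implies that the event contains $\set{\sup B_t\leq 1}$. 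The essential task is then to show that
\[
   \pr{}{\sup_{t\in[0,T]}(B_t-F(t))\leq 1}\precsim T^{-1/2}(\log T)^k
\]
for some $k$. This is the main technical obstacle: it does not follow directly from Theorem~5.1 of \cite{uchiyama:1980}, since $F(t)/\sqrt{t}=c/\log t$ decays only logarithmically, whereas Uchiyama treats boundaries $ct^\alpha$ with $\alpha<1/2$. The cleanest route is via the time change $B_t=\sqrt{t}\,Z_{(\log t)/2}$, with $Z$ a stationary Ornstein--Uhlenbeck process: the event becomes a first-exit problem for $Z$ against the barrier $s\mapsto c/(2s)+e^{-s}$ on $[0,(\log T)/2]$, with the barrier decreasing to $0^+$. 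Since the principal Dirichlet eigenvalue of the OU generator on $(-\infty,0)$ equals $1$, a chaining argument over intervals on which the barrier is nearly constant produces the desired bound $T^{-1/2}(\log T)^{O(1)}$. Granting this, the Slepian ratio yields
\[
   \pr{}{G_N}\precsim (\log\kappa(N))^k\sqrt{\kappa(g(N))/\kappa(N)}=\exp\bigl(-\tfrac{\nu}{2}N^\alpha\bigr)\exp\bigl(N^{\Lambda\alpha+o(1)}\bigr),
\]
and combining with the estimate for $\pr{}{H_N}$ completes the upper bound.
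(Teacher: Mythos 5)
Your overall architecture (lower bound by comparison with continuous time; upper bound via the $G_N\cup H_N$ decomposition, Slepian's inequality, and a moving-boundary estimate) is exactly the paper's strategy, and your treatment of $H_N$ — boundary of the size of the increment's standard deviation, cutoff $g(N)\asymp N^\Lambda$ with a large constant — is fine and even handles the constant in the exponent cleanly. The problem is your choice $f(n)=\nu n^{-\alpha}\sqrt{\kappa(n)}$, i.e.\ the borderline exponent: the interpolated boundary becomes $F(t)=c\sqrt{t}/\log t$, for which $\int_1^\infty F(t)\,t^{-3/2}\,dt=c\int_{1}^{\infty}\frac{dt}{t\log t}=\infty$, so Uchiyama's Theorem~5.1 genuinely does not apply, and your entire upper bound now rests on the unproved claim $\pr{}{\sup_{t\in[0,T]}(B_t-F(t))\leq 1}\precsim T^{-1/2}(\log T)^{k}$. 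You acknowledge this is the main obstacle, but the OU time change plus ``principal Dirichlet eigenvalue equals $1$'' plus ``chaining over intervals where the barrier is nearly constant'' is a sketch of a nontrivial estimate (one must control the prefactors accumulated on $\asymp\log\log T$ blocks, which is precisely where the claimed $(\log T)^{O(1)}$ correction has to be earned), so as written there is a real gap at the crux of the argument.

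The gap is avoidable, and this is how the paper proceeds: since $\gamma>3\alpha$, one can pick $\rho$ with $\alpha<\rho<\gamma/2$ and $\delta:=\alpha/(\gamma-2\rho)<1$, and take $f(n)=\exp(\nu n^\alpha/2)\,n^{-\rho}$ (your boundary but with exponent $\rho$ strictly above $\alpha$) together with $g(N)=\lceil N^\delta\rceil$. Then the interpolant is $F(t)=c\,\sqrt{t}/(\log t)^{\rho/\alpha}$ with $\rho/\alpha>1$, the integral $\int_1^\infty F(t)t^{-3/2}dt$ converges, and Uchiyama's theorem applies off the shelf, giving the Slepian ratio $\asymp\kappa(g(N))^{1/2}\kappa(N)^{-1/2}$; the $H_N$ term is still negligible because $\gamma/2-\rho>0$ and $\delta(\gamma-2\rho)\ge\alpha$. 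The price of moving off the borderline is that $\delta>\Lambda$, but letting $\rho\downarrow\alpha$ gives $\delta=\Lambda+o(1)$, which is exactly the $N^{\Lambda\alpha+o(1)}$ slack allowed in the statement. So either import the paper's choice of $\rho$, or supply a complete proof of your $c\sqrt{t}/\log t$ boundary estimate; without one of these the argument is incomplete.
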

\begin{proof}
For simplicity of notation, we again use the barrier $1$ instead of $0$. The result then follows in view of Remark~\ref{rem:value_of_barrier}. By assumption, there are constants $N_0, c_1,c_2>0$ such that $c_1 \exp(\nu \, n^\alpha) \leq \kappa(n) \leq c_2 \exp(\nu \, n^\alpha)$ for all $n \geq N_0$. So for all $N \geq N_0$, we have
\[
  \pr{}{\sup_{n=1,\dots,N} B_{\kappa(n)} \leq 1} \geq \pr{}{\sup_{t \in [0,c_2 \exp(\nu \, n^\alpha)]} B_t \leq 1 } \asymp \exp \left( -\frac{\nu}{2} \, N^\alpha \right), \quad N \to \infty.
\]
For the proof of the upper bound, we assume w.l.o.g.\ that $\kappa$ is nondecreasing (see the proof of Theorem~\ref{thm:polynomial_case_BM}). The assumption $\gamma > 3 \alpha$ allows us to find a constant $\rho$ with $\alpha < \rho < \gamma/2$ and  $\delta := \alpha/(\gamma - 2 \rho) < 1$. Set 
\[
   f(t) :=  \exp\left(\frac{\nu}{2} \, t^\alpha \right) \,t^{-\rho}, \qquad g(t) := \lceil t^\delta \rceil, \quad t > 0.
\]
As in the proof of Theorem~\ref{thm:polynomial_case_BM}, it holds that
\[
	\pr{}{\sup_{n=1,\dots,N} B_{\kappa(n)} \leq 1} \leq \pr{}{\sup_{n= g(N),\dots,N} B_{\kappa(n)} \leq 1} \leq \pr{}{G_N} + \pr{}{H_N},
\]
where
\begin{align*}
 G_N &:= \bigcap_{n=g(N)}^{N-1} \set{ \sup_{t \in [\kappa(n),\kappa(n+1)]} B_t \leq f(n) + 1 },\\
 H_N &:= \bigcup_{n=g(N)}^{N-1} \set{ \sup_{t \in [\kappa(n),\kappa(n+1)]} B_t - B_{\kappa(n)} > f(n) }.
\end{align*}
Note that $t \geq \kappa(n)$ implies that $n \leq (\log(t/c_1)/\nu)^{1/\alpha} =: h(t)$ if $n \geq N_0$. Keeping in mind that $f(\cdot)$ is ultimately increasing, we obtain the following estimates for large $N$ and some constant $c_3 >0$:
\begin{align*}
 \pr{}{G_N} &\leq \pr{}{ \bigcap_{n=g(N)}^{N-1} \set{ \sup_{t \in [\kappa(n),\kappa(n+1)]} B_t - f\left(h(t)\right) \leq 1 } } \\
 &= \pr{}{ \sup_{t \in [\kappa(g(N)),\kappa(N)]} B_t - c_3 \frac{\sqrt{t}}{(\log t)^{\rho / \alpha} }  \leq 1 } =: p_1(N).
\end{align*}
Next, using the stationarity and the scaling property of Brownian motion, we have that
\[
	\pr{}{H_N} \leq \sum_{n=g(N)}^{N-1} \pr{}{\sup_{t \in [0,1]} B_t > \frac{f(n)}{\sqrt{\kappa(n+1) - \kappa(n)}}} =: p_2(N).
\]
%\begin{align*}
%	&\pr{}{\sup_{n=1,\dots,N} B_{\kappa(n)} \leq 1} \leq \pr{}{\sup_{n= g(N),\dots,N} B_{\kappa(n)} \leq 1} \\
%&\quad \leq \pr{}{
%	\bigcap_{n=g(N)}^{N-1} \set{ \sup_{t \in [\kappa(n),\kappa(n+1)]} B_t \leq f(n) + 1 } 
%		\cup 
% 	 \bigcup_{n=g(N)}^{N-1} \set{ \sup_{t \in [\kappa(n),\kappa(n+1)]} B_t - B_{\kappa(n)} > f(n) }} \\
%	&\quad \leq \pr{}{
%	\bigcap_{n=g(N)}^{N-1} \set{ \sup_{t \in [\kappa(n),\kappa(n+1)]} B_t - f\left(h(t)\right) \leq 1 } }
%		+ \sum_{n=g(N)}^{N-1} \pr{}{\sup_{t \in [0,\kappa(n+1) - \kappa(n)]} B_t > f(n)} \\
% 	&\quad \leq \pr{}{
% 	\sup_{t \in [\kappa(g(N)),\kappa(N)]} B_t - c_3 \frac{\sqrt{t}}{(\log t)^{\rho / \alpha} }  \leq 1 }
%		+ \sum_{n=g(N)}^{N-1} \pr{}{\sup_{t \in [0,1]} B_t > \frac{f(n)}{\sqrt{\kappa(n+1) - \kappa(n)}}} \\
% 	&\quad =: p_1(N) + p_2(N).	
%\end{align*}
We first show that the term $p_2$ is of lower order than $\exp(-N^\alpha)$. To this end, since $\kappa(N+1) - \kappa(N) \leq c_4 \kappa(N) N^{-\gamma} \leq c_5 \exp(\nu N^\alpha) \, N^{-\gamma}$ for all $N$ sufficiently large and some constants $c_4,c_5 > 0$, we get 
\begin{align*}
p_2(N) &\leq N \, \max_{n=g(N),\dots,N}  \pr{}{\sup_{t \in [0,1]} B_t > c_5^{-1/2} \, n^{\gamma/2 - \rho}  } \\
&= N \, \pr{}{\sup_{t \in [0,1]} B_t > c_5^{-1/2} \, g(N)^{\gamma/2 - \rho} },
\end{align*}
since $\gamma/2 - \rho > 0$ by the choice of $\rho$. Recalling \eqref{eq:tails_of_BM}, we obtain
\begin{align*}
p_2(N) \leq N \exp \left( -\frac{1}{2c_5} \, g(N)^{\gamma - 2 \rho} \right) \precsim N \exp \left( -\frac{1}{2c_5} \, N^{\delta(\gamma - 2 \rho)} \right) , \quad N \to \infty.
\end{align*}
Now $\delta(\gamma - 2 \rho) > \alpha$ by the choice of $\delta$, so this term is $o(\exp(-N^\alpha))$. \\
It remains to show that 
\[
   p_1(N) \precsim \exp \left( - \frac{\nu}{2} N^{\alpha} \right) \cdot \exp \left( \frac{\nu}{2} N^{\delta \alpha}  \right), \quad N \to \infty.
\]
 Set
\[
 F(t) :=
	\begin{cases}
 		c_3 \, \frac{\sqrt{t}}{(\log t)^{\rho / \alpha} }, &\quad t \geq d_1,\\
		d_2, &\quad t < d_1.
	\end{cases}
\]
where $d_1,d_2$ are chosen in such a way that $F$ is nondecreasing and continuous. By Slepian's inequality, one has for $N$ sufficiently large
\begin{align*}	
	p_1(N) = \pr{}{\sup_{t \in  [\kappa(g(N)),\kappa(N)]}B_t - F(t) \leq 1} \leq 
	\frac{\pr{}{\sup_{t \in  [0,\kappa(N)]} B_t - F(t) \leq 1}}{\pr{}{\sup_{t \in  [0,\kappa(g(N))]} B_t - F(t) \leq 1}}.
\end{align*}
Theorem~5.1 of \cite{uchiyama:1980} ensures that the drift $F(\cdot)$ does not change the rate of the survival probability since for some $d_3 > 0$, we have
\[
   \int_1^\infty F(t) t^{-3/2} \,dt = d_3 + \int_{d_1}^\infty \frac{1}{t (\log t)^{\rho / \alpha}} < \infty
\]
because $\rho > \alpha$, and therefore, 
\begin{align*}   
&\frac{\pr{}{\sup_{t \in  [0,\kappa(N)]} B_t - F(t) \leq 1}}{\pr{}{\sup_{t \in  [0,\kappa(g(N))]} B_t - F(t) \leq 1}} \asymp \frac{\pr{}{\sup_{t \in  [0,\kappa(N)]} B_t \leq 1}}{\pr{}{\sup_{t \in  [0,\kappa(g(N))]} B_t  \leq 1}} \\
&\quad \asymp \kappa(g(N))^{1/2} \, \kappa(N)^{-1/2} \asymp \exp \left( - \frac{\nu}{2} \, N^{\alpha} \right) \cdot \exp \left( \frac{\nu}{2} \, N^{\delta \alpha}  \right),  \quad N \to \infty.
\end{align*}
Finally $\delta = \alpha/(\gamma - 2 \alpha) + o(1) = \Lambda + o(1)$ as $\rho \downarrow \alpha$.
\end{proof}

\begin{corollary}
   If $\kappa(n) = \exp(\nu \, n^\alpha)$ for some $\nu >0$ and $\alpha \in (0,1/4)$, then
\[
   \lim_{N \to \infty} N^{-\alpha} \, \log \pr{}{\sup_{n=1,\dots,N} B_{\kappa(n)} \leq 0} = -\nu/2.
\]
\end{corollary}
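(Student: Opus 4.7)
The plan is to verify the two hypotheses of Theorem~\ref{thm:subexp} for the explicit choice $\kappa(n) = \exp(\nu n^\alpha)$ and then invoke the theorem directly. The first hypothesis $\kappa(N) \asymp \exp(\nu N^\alpha)$ is immediate (in fact with equality), so the real work lies in checking the increment bound $\kappa(N+1) - \kappa(N) \precsim \kappa(N) N^{-\gamma}$ for a suitable $\gamma > 3\alpha$.

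To control the increments, I would first factor
\[
   \kappa(N+1) - \kappa(N) = \kappa(N) \left( \exp\bigl(\nu\,[(N+1)^\alpha - N^\alpha]\bigr) - 1 \right).
\]
By the mean value theorem applied to $x \mapsto x^\alpha$ on $[N, N+1]$, we have $(N+1)^\alpha - N^\alpha = \alpha \xi_N^{\alpha-1}$ for some $\xi_N \in [N, N+1]$, so $(N+1)^\alpha - N^\alpha \asymp N^{\alpha - 1}$ as $N \to \infty$. Since $\alpha < 1$, the exponent $\nu [(N+1)^\alpha - N^\alpha]$ tends to $0$, so $\exp(\nu[(N+1)^\alpha - N^\alpha]) - 1 \sim \nu \alpha N^{\alpha-1}$ by the first-order Taylor expansion of $\exp$ around $0$. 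This yields
\[
   \kappa(N+1) - \kappa(N) \asymp \kappa(N) \, N^{\alpha - 1}.
\]

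Now I would choose $\gamma := 1 - \alpha$, which gives precisely the required bound $\kappa(N+1) - \kappa(N) \precsim \kappa(N) N^{-\gamma}$. The compatibility condition $\gamma > 3\alpha$ becomes $1 - \alpha > 3\alpha$, equivalent to $\alpha < 1/4$, which is exactly the standing hypothesis of the corollary. Hence all assumptions of Theorem~\ref{thm:subexp} are met, and the conclusion
\[
   \lim_{N \to \infty} N^{-\alpha}\,\log \pr{}{\sup_{n=1,\dots,N} B_{\kappa(n)} \leq 0} = -\nu/2
\]
follows at once. There is no genuine obstacle here: the corollary is a direct specialization, and the only mildly delicate point is the Taylor-style estimate that translates the parametric growth of $\kappa$ into the increment bound, with the threshold $\alpha < 1/4$ emerging precisely from matching the exponent $1-\alpha$ against $3\alpha$.
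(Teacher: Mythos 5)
Your argument is correct and follows essentially the same route as the paper: factor $\kappa(N+1)-\kappa(N)=\kappa(N)\bigl(e^{\nu((N+1)^\alpha-N^\alpha)}-1\bigr)$, show the increment is $\asymp \kappa(N)N^{\alpha-1}$, and apply Theorem~\ref{thm:subexp} with $\gamma=1-\alpha$, which requires $1-\alpha>3\alpha$, i.e.\ $\alpha<1/4$. The only cosmetic difference is that you use the mean value theorem where the paper uses the difference quotient $\frac{(1+1/N)^\alpha-1}{1/N}\to\alpha$; the substance is identical.
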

\begin{proof}
Note that
    \begin{align*}
    & \kappa(N+1) - \kappa(N) = \kappa(N) (e^{\nu ((N+1)^\alpha - N^{\alpha})} - 1) \sim \nu \kappa(N) ((N+1)^\alpha - N^{\alpha}) \\
 &= \nu \kappa(N) N^{\alpha-1} \frac{(1 + 1/N)^\alpha - 1}{1/N} \sim \alpha \nu \, \kappa(N) N^{\alpha-1}
 \end{align*}
Hence, we can apply Theorem~\ref{thm:subexp} with $\gamma = 1 - \alpha$ if $\gamma > 3 \alpha$, i.e.\ for $\alpha \in (0,1/4)$.
\end{proof}
\begin{remark}
The case $\alpha \geq 1/4$ remains unsolved. In view of the heuristics presented below \eqref{eq:surv_prob_kappa}, it would be interesting to know whether 
\[
  \liminf_{N \to \infty} \frac{\log \pr{}{\sup_{n=1,\dots,N} B_{\kappa(n)} \leq 1}}{N^\alpha} > -\nu/2 = \lim_{N \to \infty} \frac{\log \pr{}{\sup_{t \in [1,N]} B_{\kappa(t)} \leq 1}}{N^\alpha}
\]
for some $\alpha \in [1/4,1)$. At least for $\alpha = 1$, the rate of decay of the continuous time and discrete time survival probability is different in general as we prove in the next subsection, cf.\ \eqref{eq:lambda_upper_bound}.
\end{remark}
\subsection{Exponential case}\label{sec:gaussian_exp}
In this section, we consider the asymptotic behaviour of 
\[
   \pr{}{\sup_{n=0,\dots,N} B(e^{\beta n}) \leq 0}
\]
where $\beta > 0$. It will be helpful to rewrite the process as a discrete Ornstein-Uhlenbeck process. Indeed, observe that 
\[
   \pr{}{\sup_{n=0,\dots,N}B(e^{\beta n}) \leq 0} = \pr{}{\sup_{n=0,\dots,N} e^{-\beta n /2} B(e^{\beta n}) \leq 0} = \pr{}{\sup_{n=0,\dots,N} U_{\beta n} \leq 0}
\]
where $(U_t)_{t \geq 0}$ is the Ornstein-Uhlenbeck process, i.e.\ a centered stationary Gaussian process with covariance function 
\[
   \rho(t,s) = \E{}{U_t U_s} = e^{-\abs{t-s}/2}.
\]
To our knowlegde, the survival probability of the discrete Ornstein-Uhlenbeck process has not been computed in the literature. For the continuous time case, it is has been shown that
\begin{equation}\label{eq:OU_sup_distr}
   \pr{}{\sup_{t \in [0,T]} U_t \leq 0} = \frac{1}{\pi} \, \arcsin(e^{-T/2}),
\end{equation}
see e.g.\ \cite{slepian:1962}. In fact, this relation can be established by direct computation using an intergral formula (see Eq. 6.285.1 of \cite{gradshteyn-ryzhik:2000}). It is important to remark that the survival exponent of the Ornstein-Uhlenbeck process does depend on the value of the barrier, i.e.\ for $c>0$
\[
 \pr{}{\sup_{t \in [0,T]} U_t \leq c} \asymp \exp(-\theta(c) \, T), \quad T \to \infty,
\]
for some decreasing function $\theta \colon [0,\infty) \to (0,1/2]$. We refer to \cite{beekman:1977,sato:1977} for more details and related results. In the sequel, we work with the barrier $c=0$ although the techniques presented are applicable for $c \neq 0$ as well.\\
If $p(n) = \pr{}{U_0 \leq 0, U_{\beta} \leq 0 \dots, U_{\beta n} \leq 0}$, Slepian's inequality and the stationarity of $U$ imply that $p(n+m) \geq p(n) \, p(m)$. By the usual subadditivity argument this implies the existence of $\lambda_\beta \in (0,\infty]$ such that
\begin{equation}\label{eq:exp_rate_gaussian}
   \lim_{N \to \infty} -\frac{1}{N} \log \pr{}{\sup_{n=0,\dots,N} U_{\beta n} \leq 0} = \lambda_\beta.
\end{equation}
Slepian's inequality further implies that $\beta \mapsto \lambda_\beta$ is nondecreasing. 
%To see this, note that for $\beta_1 > \beta_2 > 0$ 
% \[
%    \E{}{U(\beta_1 s)U(\beta_1 t)} = \exp(-\beta_1 \abs{t-s}/2) \leq \exp(-\beta_2 \abs{t-s}/2) = \E{}{U(\beta_2 s)U(\beta_2 t)}.
% \]
Unfortunately, we are not able to obtain an explicit expression for $\lambda_\beta$. However, we provide several estimates which are summarized in 
\begin{theorem}\label{thm:summary_exp}
For all $\beta >0$, we have that
\begin{equation}\label{eq:lambda_lower_bound}
   \lambda_\beta \geq 
\begin{cases}
   \log(2) - c(\beta), &\quad \beta > \beta_0,\\
 (\log(2) - c(\beta m))/m, &\quad \beta \in (0,\beta_0], \quad m = \lceil \beta_0 / \beta \rceil,
\end{cases}
\end{equation}
where
\[
   c(x) := \frac{e^{-x/2}}{1 - e^{-x/2}}, \quad x > 0, \quad \beta_0 := 2 \log(1 + 1/\log 2) \approx 1.786.
\]
Moreover,
\begin{equation}\label{eq:lambda_upper_bound}
   \lambda_\beta \leq \begin{cases}
   \beta/2, &\quad \beta \in (0,\beta_1],\\
 \log(2) - \log\left(1 + \frac{2}{\pi}  \arcsin\left(e^{-\beta/2}\right) \right) , &\quad \beta \in [\beta_1,\infty),
\end{cases}
\end{equation}
where $\beta_1 \approx 0.472$ is the unique solution on $(0,\infty)$ to the equation 
\[
 \frac{\beta}{2} = \log(2) - \log\left(1 + \frac{2}{\pi}  \arcsin\left(e^{-\beta/2}\right) \right).
\]
\end{theorem}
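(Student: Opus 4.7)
I would organize the proof around the AR(1) representation of the sampled chain. Writing $V_n := U_{\beta n}$, $\mu := e^{-\beta/2}$, $\sigma := \sqrt{1-\mu^2}$, and $a := \mu/\sigma$, the sequence $(V_n)$ is a Gaussian Markov chain with stationary $\mathcal{N}(0,1)$ marginals satisfying $V_n = \mu V_{n-1} + \sigma Z_n$ for an i.i.d.\ $\mathcal{N}(0,1)$ sequence $(Z_n)$ independent of $V_0$. Denote $p(N) := \pr{}{V_0 \leq 0,\dots, V_N \leq 0}$ and $q := \pr{}{V_0 \leq 0, V_1 \leq 0} = 1/4 + \arcsin(\mu)/(2\pi)$. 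The two-dimensional orthant formula gives $\pr{}{V_1 \leq 0 \mid V_0 \leq 0} = 2q = 1/2 + (1/\pi)\arcsin(\mu)$, and the identity $-\log(2q) = \log 2 - \log(1 + (2/\pi)\arcsin(\mu))$ lets me recognize (U2) as the statement $\lambda_\beta \leq -\log(2q)$. The threshold $\beta_1$ is simply the point where the two upper bounds $\beta/2$ and $-\log(2q)$ coincide.

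For the upper bounds \eqref{eq:lambda_upper_bound}, I would derive $\lambda_\beta \leq \beta/2$ directly from \eqref{eq:OU_sup_distr} by noting $p(N) \geq \pr{}{\sup_{t \in [0,\beta N]} U_t \leq 0} = (1/\pi)\arcsin(e^{-\beta N/2}) \sim (1/\pi) e^{-\beta N/2}$. For $\lambda_\beta \leq -\log(2q)$, I would expand $p(N+1)/p(N) = \E{}{\Phi(a|V_N|) \mid V_0 \leq 0,\dots, V_N \leq 0}$ using the Markov property and then compare against $\E{}{\Phi(a|V_N|) \mid V_N \leq 0} = 2q$. The comparison goes through a stochastic domination: the conditional density of $V_N$ given $\{V_0 \leq 0,\dots,V_N \leq 0\}$ is proportional to $\phi(x)\mathbf{1}_{x \leq 0} \cdot \pr{}{V_0 \leq 0,\dots,V_{N-1} \leq 0 \mid V_N = x}$, and the conditional probability factor is decreasing in $x$ by Gaussian regression (smaller $x$ shifts every $\E{}{V_i \mid V_N = x} = \mu^{N-i} x$ further into $(-\infty,0]$). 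Hence the longer conditioning stochastically shrinks $V_N$, and since $x \mapsto \Phi(a|x|)$ is decreasing on $(-\infty,0]$, the conditional expectation can only increase. Iterating yields $p(N) \geq (1/2)(2q)^N$.

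For the lower bounds \eqref{eq:lambda_lower_bound} I would first reduce to the regime $\beta > \beta_0$ by the trivial subsampling estimate $p_\beta(N) \leq p_{\beta m}(\lfloor N/m\rfloor)$, which gives $\lambda_\beta \geq \lambda_{\beta m}/m$; choosing $m = \lceil \beta_0/\beta\rceil$ ensures $\beta m \geq \beta_0$ so the large-$\beta$ bound applies. For the main bound $\lambda_\beta \geq \log 2 - c(\beta)$ when $\beta > \beta_0$, the plan is to iterate the representation $p(N) = \int_{-\infty}^0 \phi(x)\,(T^N \mathbf{1})(x)\,dx$ where $(Tu)(x) := \int_{-\infty}^0 K(x,y) u(y)\,dy$ and $K$ is the OU transition kernel, and then upper-bound the spectral radius of $T$ via a Collatz–Wielandt estimate $\rho(T) \leq \sup_{x \leq 0} (Th)(x)/h(x)$ for a suitable test function $h > 0$. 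A natural ansatz is $h(x) = e^{c|x|}$: explicit Gaussian integration gives
\[
  \frac{(Th)(x)}{h(x)} = e^{(\mu-1)c|x| + c^2\sigma^2/2}\,\Phi\left(\frac{\mu|x|}{\sigma} + c\sigma\right),
\]
and tuning $c$ to balance the contribution at $|x|=0$ against the decay at $|x| \to \infty$ produces the constant $\tfrac12 e^{c(\beta)}$. The bound $\beta > \beta_0$ is exactly what guarantees $\tfrac12 e^{c(\beta)} < 1$ so that the logarithmic version is positive.

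The main obstacle is the last step: choosing the right test function $h$ and executing the Gaussian computation so that the resulting constant is precisely $c(\beta) = e^{-\beta/2}/(1-e^{-\beta/2})$. A naive exponential test function yields a spectral bound that is sharper than $\tfrac12 e^{c(\beta)}$ in the leading order (with a factor $2/\pi$), so the task is really to find a version of the argument whose algebra lands exactly on the clean constant $c(\beta)$, or alternatively to use a cruder but more transparent recursion $p(N+1) \leq \tfrac12 p(N) + (\text{small correction})$ together with a uniform bound on $\E{}{|V_N|\,\mathbf{1}_{\text{survive to }N}}$ extracted from the contraction $|V_N| = \mu|V_{N-1}| - \sigma Z_N$ valid on the survival event. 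The verification of the FKG-type stochastic domination used in (U2) is a secondary but purely elementary step that I would prefer to state as a lemma about bivariate Gaussians with nonnegative correlation before invoking it in the induction.
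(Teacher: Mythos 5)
Your treatment of the upper bounds \eqref{eq:lambda_upper_bound} is correct and essentially the paper's: the comparison with the continuous-time Ornstein--Uhlenbeck probability \eqref{eq:OU_sup_distr} gives $\lambda_\beta\le\beta/2$, and your iteration $p(N)\ge\tfrac12(2q)^N$ with $2q=\tfrac12+\tfrac1\pi\arcsin(e^{-\beta/2})$ is exactly Lemma~\ref{lem:lower_bound_exp_BM}. Where the paper invokes Lemma~5 of \cite{bramson:1978} for $\pr{}{V_{N+1}\le 0\mid V_0\le0,\dots,V_N\le0}\ge\pr{}{V_{N+1}\le0\mid V_N\le0}$, you give a self-contained substitute (the conditional means $\mu^{N-i}x$ are nondecreasing in $x$, so the extra conditioning tilts the law of $V_N$ downward in likelihood-ratio order, and $x\mapsto\Phi(a\abs{x})$ is nonincreasing on $(-\infty,0]$); that argument is sound. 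The subsampling reduction of $\beta\in(0,\beta_0]$ to $\beta m>\beta_0$ also matches the paper's second lemma.

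The genuine gap is the heart of \eqref{eq:lambda_lower_bound}, namely $\lambda_\beta\ge\log 2-c(\beta)$ for $\beta>\beta_0$. You propose a Lyapunov/Collatz--Wielandt bound $\sup_{x\le0}(Th)(x)/h(x)$ with $h(x)=e^{c\abs{x}}$, but the optimization is not carried out, and you say yourself that you could not make the algebra produce $c(\beta)=e^{-\beta/2}/(1-e^{-\beta/2})$, nor any explicit constant that falls below $\log 2$ precisely for $\beta>\beta_0$; the fallback ``cruder recursion'' is likewise unexecuted, so the first half of the theorem remains unproved in your write-up. The paper's argument is of a different and much shorter nature: by Corollary~2.3 of \cite{li-shao:2002} (a Gaussian correlation inequality for standardized variables with nonnegative correlations), $\pr{}{\sup_{n=0,\dots,N}U_{\beta n}\le0}\le 2^{-(N+1)}\exp\bigl(\sum_{1\le i<j\le N+1}e^{-\beta\abs{i-j}/2}\bigr)$, and the geometric double sum is bounded by $c(\beta)N$, which is the whole proof; $\beta_0$ is exactly the point where $c(\beta)=\log 2$. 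If you insist on the operator route, two remarks: the remark after Proposition~\ref{prop:int_eq} notes that the spectral radius of the sub-Markov operator on $L^1((-\infty,0])$ is $1>e^{-\lambda_\beta}$, so only a weighted (Lyapunov-type) estimate of the kind you sketch can work, and it does yield $\lambda_\beta\ge\log 2-o(1)$ in principle since the ratio at $x=0$ equals $e^{c^2\sigma^2/2}\Phi(c\sigma)>\tfrac12$; but to prove the stated theorem you must exhibit a $c$ for which the supremum is at most $\tfrac12e^{c(\beta)}$ (or at least strictly less than $1$ exactly when $\beta>\beta_0$), and that computation is missing. Possibly your route gives a competing, even sharper, bound for large $\beta$, but as submitted it does not establish \eqref{eq:lambda_lower_bound}.
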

\begin{remark}
   For $\beta > \beta_0$, the above theorem implies that
\[
   \frac{2}{\pi} e^{-\beta/2} \sim \log\left(1 + \frac{2}{\pi}  \arcsin\left(e^{-\beta/2}\right) \right) \leq \log(2) - \lambda_\beta \leq c(\beta) \sim e^{-\beta/2}, \quad \beta \to \infty,
\]
i.e.\ $\lambda_\beta \downarrow \log 2$ exponentially fast as $\beta \uparrow \infty$. \\
However, it remains an open question whether $\lambda_\beta$ is stricly less than $\beta/2$ also for $\beta < \beta_1$ (this would imply that the rate in the discrete time and continuous time framework does not coincide for all $\beta$) and whether $\lambda_\beta \sim \beta/2$ as $\beta \downarrow 0$. 
\end{remark}
\subsubsection{Upper bounds for the survival probability}
Here we prove the first part of the inequality \eqref{eq:lambda_lower_bound}.
\begin{lemma}\label{lem:upper_bound_exp_BM}
Let $\beta > \beta_0 = 2 \log(1 + 1/\log 2)$. Then for all $N$
\[
   \pr{}{\sup_{n=0,\dots,N} B(e^{\beta n}) \leq 0 } \leq \frac{1}{2}\exp\left( -\left(\log 2 - c(\beta)\right) \, N  \right).
\]
where $c(\beta) \in (0,\log 2)$ is defined in Theorem~\ref{thm:summary_exp}.
\end{lemma}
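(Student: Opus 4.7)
The plan is to reduce the Brownian-motion event to a Markov chain (the discrete Ornstein-Uhlenbeck chain) and iterate a sharp one-step Gaussian bound.

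First, I would pass to the representation $V_n := U_{\beta n} = e^{-\beta n/2}B(e^{\beta n})$ introduced in the paper, with $\rho := e^{-\beta/2}$. Then $V_n = \rho V_{n-1} + \sqrt{1-\rho^2}\,\xi_n$ for iid standard normal $\xi_n$, so conditional on $V_{n-1} = x$ the law of $V_n$ is $N(\rho x, 1-\rho^2)$. Writing $g_N(x) := \pr{}{V_1 \leq 0, \dots, V_N \leq 0 \mid V_0 = x}$ and $\phi$ for the standard Gaussian density, one has
\[
   p_N = \int_{-\infty}^0 \phi(x)\,g_N(x)\,dx,
\]
and $g_N$ satisfies the recursion $g_{N+1}(x) = \int_{-\infty}^0 \varphi_{1-\rho^2}(y-\rho x)\,g_N(y)\,dy$ by the Markov property.

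The main analytic tool is the elementary inequality
\[
   2\,\Phi(s) \leq \exp\!\left(\sqrt{2/\pi}\,s\right), \qquad s \geq 0,
\]
which follows from the observation that $e^{\sqrt{2/\pi}s} - 2\Phi(s)$ vanishes together with its first derivative at $s=0$ and has positive second derivative on $[0,\infty)$. Applying this to $g_1(x) = \Phi(-\rho x/\sqrt{1-\rho^2})$ provides the base case of an induction showing
\[
   g_N(x) \leq 2^{-N}\exp(a_N + b_N x), \qquad x \leq 0,
\]
for explicit real sequences $(a_N,b_N)$ with $b_N \leq 0$. The induction step evaluates the Gaussian transition integral by completing the square, which produces another $\Phi$ of a nonnegative argument; bounding that $\Phi$ by the same inequality gives linear recursions for $(a_N,b_N)$. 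Solving them yields $b_N = -c^\ast(1-\rho^N)$ with $c^\ast = \sqrt{2/\pi}\,\rho/((1-\rho)\sqrt{1-\rho^2})$ and a telescoping formula for $a_N$ in terms of $c(\beta) = \rho/(1-\rho)$.

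Integrating the bound against $\phi$ on $(-\infty,0]$ produces a final Gaussian integral that, after completing the square and one last application of the $\Phi$-inequality at the nonnegative argument $|b_N|$, yields
\[
   p_N \leq 2^{-(N+1)}\exp\!\left(a_N + b_N^2/2 + |b_N|\sqrt{2/\pi}\right).
\]
Since $2^{-(N+1)}e^{c(\beta)N} = (1/2)\exp(-(\log 2 - c(\beta))N)$, it remains to verify that the exponent on the right is bounded by $c(\beta)\,N$ for every $N \geq 1$. I expect this last numerical check to be the main obstacle: it reduces to the two inequalities $(c(\beta)+2)/\pi \leq 1$ and $c(\beta) + 2\sqrt{1-\rho^2} \leq \pi(1-\rho^2)$, both of which are implied by $c(\beta) < \log 2$, i.e.\ by $\beta > \beta_0 = 2\log(1+1/\log 2)$. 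The threshold value is precisely what makes the non-trivial regime of the claim and the validity of these numerical estimates coincide, producing the bound with the stated constant $1/2$ rather than some larger universal constant.
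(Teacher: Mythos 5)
Your argument is correct, but it is genuinely different from the paper's. The paper proves this lemma in two lines: it rewrites the event in terms of the stationary Ornstein--Uhlenbeck sequence $U_{\beta n}$ and applies the normal comparison inequality of Li and Shao (Corollary 2.3 of \cite{li-shao:2002}), which bounds the joint probability by $2^{-(N+1)}\exp\bigl(\sum_{1\le i<j\le N+1}e^{-\beta|i-j|/2}\bigr)$; summing the geometric series gives the exponent $c(\beta)N$ and hence exactly the stated bound. You instead run an explicit induction on the AR(1) representation $V_n=\rho V_{n-1}+\sqrt{1-\rho^2}\,\xi_n$, propagating barrier functions $2^{-N}e^{a_N+b_Nx}$ via the Chernoff-type bound $2\Phi(s)\le e^{\sqrt{2/\pi}\,s}$, $s\ge 0$; I checked the recursions ($b_{N+1}=\rho b_N-\sqrt{2/\pi}\,\rho/\sqrt{1-\rho^2}$, so $|b_N|\le c^\ast$, and per-step increments of $a_N$ bounded by $(c(\beta)^2+2c(\beta))/\pi$) and the final integration, and your two reduced inequalities $(c(\beta)+2)/\pi\le 1$ and $c(\beta)+2\sqrt{1-\rho^2}\le\pi(1-\rho^2)$ are indeed both valid for $c(\beta)<\log 2$ (the second needs a short endpoint-plus-monotonicity check in $\rho$, since it involves $\rho$ and not only $c(\beta)$; it in fact holds with some slack, up to roughly $c\approx 0.75$, so your closing remark that the threshold $\beta_0$ coincides exactly with the validity of these estimates is not quite accurate, though harmless). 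What each route buys: the paper's proof is short but leans on an external Gaussian comparison theorem; yours is elementary and self-contained, reproduces the same constant $c(\beta)$ and prefactor $1/2$, and the slack in the per-step bound $c(c+2)/\pi<c$ even shows your method could yield a slightly better exponent than $c(\beta)$, at the price of the bookkeeping and the final numerical verification.
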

\begin{proof}
First, note that $c(\cdot)$ is decreasing with $c(\beta_0) = \log 2$. Since $\set{ B(e^{\beta n}) \leq 0 }  = \set{ U_{\beta n} \leq 0}$, we have by Corollary 2.3 of \cite{li-shao:2002} 
\begin{align*}
   \pr{}{\sup_{n=0,\dots,N} B(e^{\beta n}) \leq 0} &\leq \prod_{n=1}^{N+1} \pr{}{U_{\beta (n-1)} \leq 0} \, \exp\left(\sum_{1 \leq i < j \leq N + 1} e^{-\beta \abs{i - j}/2} \right) \\
	&= 2^{-(N+1)} \, \exp\left(\sum_{1 \leq i < j \leq N + 1} e^{-\beta \abs{i - j}/2} \right).
\end{align*}
One computes
\begin{align*}
 \sum_{1 \leq i < j \leq N+1} e^{-\beta \abs{i - j}/2} &= \sum_{i=1}^{N} \sum_{j=i+1}^{N+1} e^{-\beta (j-i)/2} = \sum_{i=1}^{N} \sum_{j=1}^{N+1-i} e^{-\beta j/2}\\
 &= c(\beta) \sum_{i=1}^{N} (1 - e^{-\beta(N+1-i)/2}) \leq c(\beta) \, N.
%c(\beta) N - c(\beta) e^{-\beta (N+1)/2} \sum_{i=1}^{N} e^{\beta i /2}\\
%&\quad = c(\beta)N - c(\beta) e^{-\beta (N+1)/2} e^{\beta/2} \frac{e^{\beta N /2}-1}{e^{\beta /2} - 1} = c(\beta) N -  c(\beta) %\frac{1-e^{-N\beta/2}}{e^{\beta/2} - 1}.
\end{align*}
\end{proof}
Next, we prove the second part of \eqref{eq:lambda_lower_bound}. For small $\beta$, we rescale the exponent of the weight function in order to apply Lemma \ref{lem:upper_bound_exp_BM}.
\begin{lemma}
Let $0 < \beta < \beta_0$ and set $m = m_\beta = \lceil \beta_0 / \beta \rceil$. Then
\[
   \pr{}{\sup_{n=0,\dots,N} B(e^{\beta n}) \leq 0 } \leq \exp\left( -\frac{\log 2 - c(\beta m)}{m} \, N - c(\beta m)\right), \quad N > m.
\]
\end{lemma}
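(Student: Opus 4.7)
The strategy is to reduce the small-$\beta$ regime to the large-$\beta$ regime already handled by Lemma~\ref{lem:upper_bound_exp_BM} by thinning the sequence: keep only those indices $n$ that are multiples of $m$. Since $m=\lceil\beta_0/\beta\rceil$, we automatically have $\beta m\geq\beta_0$, placing the thinned problem in the regime where the previous lemma applies.

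Concretely, set $K:=\lfloor N/m\rfloor$. Restricting the supremum to a subset of indices can only enlarge the event, so
\[
\pr{}{\sup_{n=0,\ldots,N}B(e^{\beta n})\leq 0}\leq \pr{}{\sup_{k=0,\ldots,K}B\bigl(e^{(\beta m)k}\bigr)\leq 0}.
\]
Since $\beta m\geq\beta_0$, Lemma~\ref{lem:upper_bound_exp_BM} (applied with parameter $\beta':=\beta m$ and time horizon $K$) gives
\[
\pr{}{\sup_{k=0,\ldots,K}B\bigl(e^{(\beta m)k}\bigr)\leq 0}\leq \frac{1}{2}\exp\!\bigl(-(\log 2-c(\beta m))\,K\bigr).
\]

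Using $K=\lfloor N/m\rfloor\geq N/m-1$ and the fact that $\log 2-c(\beta m)\geq 0$, the exponent on the right satisfies
\[
-(\log 2-c(\beta m))\,K\leq-\frac{\log 2-c(\beta m)}{m}\,N+(\log 2-c(\beta m)).
\]
Combining with the leading factor $\tfrac{1}{2}=e^{-\log 2}$ and canceling the $\log 2$ terms yields
\[
\frac{1}{2}\exp\!\bigl(-(\log 2-c(\beta m))\,K\bigr)\leq \exp\!\Bigl(-\tfrac{\log 2-c(\beta m)}{m}\,N-c(\beta m)\Bigr),
\]
which is exactly the claimed bound. The condition $N>m$ guarantees $K\geq 1$, so the application of Lemma~\ref{lem:upper_bound_exp_BM} is used in its effective range.

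There is no serious obstacle here: the proof is a direct reduction via subsequence extraction together with a short algebraic manipulation to absorb the constants. The only point requiring care is checking that $\beta m\geq\beta_0$ so that Lemma~\ref{lem:upper_bound_exp_BM} is legitimately applicable (which is built into the definition $m=\lceil\beta_0/\beta\rceil$), and that the inequality $\lfloor N/m\rfloor\geq N/m-1$ is used in the correct direction after multiplication by the nonpositive factor $-(\log 2-c(\beta m))$.
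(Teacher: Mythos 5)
Your proof is correct and follows essentially the same route as the paper: thin the index set to multiples of $m$ (including $n=0$), apply Lemma~\ref{lem:upper_bound_exp_BM} with parameter $\beta m$ and horizon $\lfloor N/m\rfloor$, and then use $\lfloor N/m\rfloor\geq N/m-1$ together with the prefactor $\tfrac12=e^{-\log 2}$ to obtain the stated exponent. The paper phrases the thinning via isolating $B_1\leq 0$ and rescaling indices, but this is only a presentational difference.
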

\begin{proof}
   Clearly, for $N > m$, 
\begin{align*}
	&\pr{}{\sup_{n=0,\dots,N} B(e^{\beta n}) \leq 0 } \leq \pr{}{B_1 \leq 0, \, \sup_{n=m,\dots,N} B(e^{\beta n}) \leq 0 } \\
&\quad = \pr{}{B_1 \leq 0, \, \sup_{n \in \set{1,(m+1)/m,\dots,(N-1)/m,N/m}} B(e^{m\beta n}) \leq 0} \\ 
&\quad \leq \pr{}{\sup_{n \in \set{0,1,2,\dots,\lfloor N/m \rfloor}} B(e^{m\beta n}) \leq 0} \leq e^{-(\log 2 - c(\beta m)) \lfloor N/m \rfloor} /2  
\end{align*}
by Lemma \ref{lem:upper_bound_exp_BM} since  $\beta m > \beta_0$. Using that $\lfloor N/m \rfloor \geq N/m - 1$, the assertion follows.
\end{proof}
\subsubsection{Lower bounds for the survival probability}
We now prove \eqref{eq:lambda_upper_bound}. In view of \eqref{eq:OU_sup_distr}, a comparison to the continuous time framework yields
\[
   \pr{}{\sup_{n=0,\dots,N} B(e^{\beta n}) \leq 0} \geq  \pr{}{\sup_{0 \leq t \leq N} U_{\beta t} \leq 0} \sim \pi^{-1} \, e^{-\beta N/2}, \quad N \to \infty. 
\]
Obviously, for any sequence $0 = t_0 < t_1 < \dots < t_N$, we have
\begin{equation}\label{eq:one_half_to_N}
   \pr{}{\sup_{n=1,\dots,N} B(t_n) \leq 0} \geq \pr{}{B(t_1) \leq 0, \sup_{n=2,\dots,N} B(t_n) - B(t_{n-1}) \leq 0} = 2^{-N},
\end{equation}
by independence and symmetry of the increments (or simply Slepian's inequality again). \\
For the exponential case, simple lower bounds are therefore
\[
   \pr{}{ \sup_{n=0,\dots,N}B(e^{\beta n}) \leq 0} \succsim \exp(- (\tfrac{\beta}{2} \wedge \log 2) \cdot N), \qquad N \to \infty.
\]
In particular, this shows that $\lambda_\beta \leq \beta/2$ as stated in \eqref{eq:lambda_upper_bound}. The fact that the probability $\pr{}{B_t \leq 0, B_s \leq 0}$ admits an explicit formula in terms of $s$ and $t$ can be used to establish a new lower bound that improves the trivial bound $\log 2$ and completes the proof of \eqref{eq:lambda_upper_bound}.
\begin{lemma}\label{lem:lower_bound_exp_BM}
    \[
        \pr{}{\sup_{n=0,\dots,N} B(e^{\beta n}) \leq 0} \geq \frac{1}{2} \left( \frac{1}{2} + \frac{1}{\pi} \arcsin \left( e^{-\beta/2} \right)  \right)^N. 
    \]
\end{lemma}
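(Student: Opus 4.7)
The plan is to reduce to the discrete Ornstein--Uhlenbeck chain and exploit its AR(1)/Markov structure. Set $r := e^{-\beta/2}$ and $V_n := U_{\beta n} = e^{-\beta n/2} B(e^{\beta n})$. Then $(V_n)$ is stationary standard Gaussian with $V_n = r V_{n-1} + \sqrt{1-r^2}\, Z_n$, where the $Z_n$ are i.i.d.\ standard normal independent of $V_0, \ldots, V_{n-1}$. The standard orthant-probability formula for a bivariate normal gives $\pr{}{V_0 \leq 0, V_1 \leq 0} = \tfrac{1}{4} + \tfrac{1}{2\pi}\arcsin(r)$, so
\[
q := \pr{}{V_1 \leq 0 \mid V_0 \leq 0} = \tfrac{1}{2} + \tfrac{1}{\pi}\arcsin\bigl(e^{-\beta/2}\bigr),
\]
and the right-hand side of the lemma is $\tfrac12 q^N$. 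Using $\pr{}{V_0 \leq 0} = 1/2$, it therefore suffices to factor
\[
p_N := \pr{}{V_0 \leq 0, \ldots, V_N \leq 0} = \pr{}{V_0 \leq 0} \prod_{n=1}^N \pr{}{V_n \leq 0 \mid V_0 \leq 0, \ldots, V_{n-1} \leq 0}
\]
and show that each conditional factor is bounded below by $q$.

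By the Markov property, $\pr{}{V_n \leq 0 \mid V_0, \ldots, V_{n-1}} = \Phi\bigl(-r V_{n-1}/\sqrt{1-r^2}\bigr)$, which is a strictly decreasing function of $V_{n-1}$. Writing $\psi(v) := \Phi(-rv/\sqrt{1-r^2})$, we have
\[
\pr{}{V_n \leq 0 \mid V_0 \leq 0, \ldots, V_{n-1} \leq 0} = \E{}{\psi(V_{n-1}) \mid V_0 \leq 0, \ldots, V_{n-1} \leq 0},
\]
while by stationarity $q = \E{}{\psi(V_{n-1}) \mid V_{n-1} \leq 0}$. Hence the lower bound reduces to the stochastic ordering statement that the conditional law of $V_{n-1}$ under the richer event is no larger than under $\set{V_{n-1} \leq 0}$ alone, i.e.\ $\E{}{\phi(V_{n-1}) \mid V_0 \leq 0, \ldots, V_{n-1} \leq 0} \geq \E{}{\phi(V_{n-1}) \mid V_{n-1} \leq 0}$ for every decreasing $\phi$.

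The core technical ingredient is a one-dimensional Chebyshev/correlation inequality. Conditionally on $V_{n-1} = v$, each $V_i$ with $i < n-1$ is normal with mean $r^{n-1-i}v$ and a variance not depending on $v$; since $r > 0$, the function $h(v) := \pr{}{V_0 \leq 0, \ldots, V_{n-2} \leq 0 \mid V_{n-1} = v}$ is decreasing in $v$. With $\varphi$ the standard normal density, the desired stochastic ordering is then equivalent to
\[
\frac{\int_{-\infty}^{0}\phi(v)\,h(v)\,\varphi(v)\,dv}{\int_{-\infty}^{0} h(v)\,\varphi(v)\,dv} \geq \frac{\int_{-\infty}^{0}\phi(v)\,\varphi(v)\,dv}{\int_{-\infty}^{0}\varphi(v)\,dv},
\]
which is precisely the statement that the two comonotone (both decreasing) functions $\phi$ and $h$ are positively correlated against the measure $\indic{v \leq 0}\varphi(v)\,dv$, an elementary consequence of $(\phi(v)-\phi(w))(h(v)-h(w)) \geq 0$. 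Iterating the resulting bound $\pr{}{V_n \leq 0 \mid V_0 \leq 0, \ldots, V_{n-1} \leq 0} \geq q$ over $n = 1,\ldots,N$ then gives $p_N \geq \tfrac{1}{2} q^N$. The main obstacle is isolating the correct correlation step: once the Markov representation is in place, the identification of the decreasing function $h$ and the appeal to Chebyshev's sum inequality are the only non-computational ingredients.
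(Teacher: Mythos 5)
Your proposal is correct and follows essentially the same route as the paper: you factor the survival probability into one-step conditional probabilities, bound each factor from below by the two-point quantity $\pr{}{V_n \leq 0 \mid V_{n-1} \leq 0}$, and evaluate that quantity via the Gaussian orthant formula with correlation $e^{-\beta/2}$ (the paper computes the same number through $\pr{}{B_s \leq 0, B_t \leq 0}$ and an arctan--arcsin conversion). The only substantive difference lies in the key inequality $\pr{}{V_n \leq 0 \mid V_0 \leq 0, \dots, V_{n-1} \leq 0} \geq \pr{}{V_n \leq 0 \mid V_{n-1} \leq 0}$: the paper simply cites Lemma~5 of Bramson (1978), whereas you prove it from scratch by writing the left-hand side, via the Markov/AR(1) structure, as an average of the decreasing function $\psi(v) = \Phi\bigl(-rv/\sqrt{1-r^2}\bigr)$ against the conditional law of $V_{n-1}$, observing that $h(v) = \pr{}{V_0 \leq 0, \dots, V_{n-2} \leq 0 \mid V_{n-1} = v}$ is decreasing, and invoking the Chebyshev/Harris correlation inequality for two decreasing functions under the restricted Gaussian measure on $(-\infty,0]$. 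This self-contained substitute is valid and arguably more transparent, and it makes clear that only positivity of the one-step correlation $r = e^{-\beta/2}$ is used; the one point to state more carefully is that, conditionally on $V_{n-1} = v$, the whole vector $(V_0,\dots,V_{n-2})$ is a fixed centered Gaussian law shifted by the mean vector $(r^{\,n-1-i}v)_i$ (the conditional covariance matrix, not just the marginal variances, is independent of $v$), which is exactly what makes $h$ decreasing.
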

\begin{proof}
   Let $A_n := \set{\sup_{k = 0, \dots, n} B(e^{\beta k}) \leq 0}$. Then
 \begin{align*}
     \pr{}{A_N} &= \pr{}{B(e^{\beta N}) \leq 0 \vert A_{N-1}} \, \pr{}{A_{N-1}} 
 = \pr{}{X_0 \leq 0} \prod_{n=1}^N \pr{}{B(e^{\beta n}) \leq 0 \vert A_{n-1}} \\
& \geq \frac{1}{2} \prod_{n=1}^N \pr{}{B(e^{\beta n}) \leq 0 \vert B(e^{\beta (n-1)}) \leq 0 },
 \end{align*}
where the inequality follows from Lemma 5 of \cite{bramson:1978}. Next, recall that
\[
   \pr{}{B_s \leq 0, B_t \leq 0} = \frac{1}{4} + \frac{1}{2\pi} \arctan \left( \sqrt{\frac{s}{t-s}} \right), \quad s < t,
\]
see e.g.\ Exercise 8.5.1 in \cite{grimmett-stirzaker:2001}. In particular,
\[
   \pr{}{B(e^{\beta n}) \leq 0 \vert B(e^{\beta (n-1)}) \leq 0 } = \frac{1}{2} + \frac{1}{\pi} \arctan \left( \frac{1}{\sqrt{\exp(\beta)-1}} \right), \quad n \geq 1,
\]
independent of $n$. Now use that $\arctan(x) = \arcsin(x/\sqrt{x^2 + 1})$.
\end{proof}
\subsubsection{A related Fredholm integral equation}
If $(Y_n)_{n \geq 0}$ is a sequence of independent standard normal random variables, set 
\[
   X_0 = Y_0, \quad X_n = e^{-\beta/2} X_{n-1} + (1 - e^{-\beta})^{1/2} Y_n, \quad n \geq 1.
\]
One can check that $(X_n)_{n \geq 0}$ and $(U(\beta n))_{n \geq 0}$ are equal in distribution. The above recursion equation is a special case of an autogregressive model of order $1$ (AR(1)-model) that can also be used to define a discrete version of the Ornstein-Uhlenbeck process if the $Y_n$ are not necessarily Gaussian, see e.g.\ \cite{larralde:2004}. Larralde explicitly computes the generating function of the first hitting time of the set $(0,\infty)$ if the $Y_n$ have a two-sided exponential distribution. Conditions ensuring that exponential moments of the first hitting time of the set $[x,\infty)$ ($x \geq 0$) exist for an AR(1) process can be found in \cite{novikov-kordzakhia:2008}.
\\
We only discuss the case of standard normal random variables $Y_n$. Recall from the beginning of Section~\ref{sec:gaussian_exp} that $(X_n)_{n \geq 0}$ is a stationary Markov chain with transition density 
\[
	p(x,y) := \frac{1}{\sqrt{2\pi}\sigma} \exp \left( -\frac{(y-\rho x)^2}{2 \sigma^2} \right), \quad x,y \in \mathbb{R},
\] 
where $\rho = e^{-\beta/2}$ and $\sigma = \sqrt{1-e^{-\beta}}$. Set $A_n := \set{X_0 \leq 0, \dots, X_n \leq 0}$ and let $\pi_n$ be the law of $X_n$ given $A_n$, i.e.
\[
   \pi_n((-\infty,u]) := \pr{}{X_n \leq u \vert A_n}, \quad u \leq 0.
\]
\begin{proposition}\label{prop:int_eq}
It holds that 
\[
   \lim_{N \to \infty} -\frac{1}{N} \, \log \pr{}{\sup_{n=0,\dots,N} X_n \leq 0} = \lambda_\beta
\]
where 
\[
   \pr{}{X_n \leq 0 \vert A_{n-1}} \nearrow \exp(-\lambda_\beta), \quad n \to \infty.
\]
Moreover, the sequence $(\pi_n)_{n \geq 0}$ converges weakly to a probability measure $\pi$ on $(-\infty,0]$ which is absolutely continuous w.r.t.\ the Lebesgue measure on $(-\infty,0]$. Denote its density by $\varphi$. Then $\varphi$ satisfies the following Fredholm integral equation of second kind:
\[
   \exp(-\lambda_\beta) \, \varphi(u) = \int_{-\infty}^0 p(y,u) \, \varphi(y) \, dy, \quad u \leq 0.
\]
\end{proposition}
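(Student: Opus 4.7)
The plan is to recast the survival event as the iteration of a positive compact integral operator and apply Perron--Frobenius theory. Write $\gamma$ for the standard normal density and use the reversibility relation $\gamma(x) p(x,y) = \gamma(y) p(y,x)$ of the AR(1) chain. Introduce the operator
\[
(Qf)(x) := \int_{-\infty}^0 p(x,y) f(y) \, dy, \qquad x \leq 0,
\]
and write $v_n(x) := \pr{}{X_1 \leq 0, \dots, X_n \leq 0 \mid X_0 = x}$. The Markov property gives $v_n = Q v_{n-1}$ with $v_0 \equiv 1$, and a short time-reversal argument shows that $\pr{}{A_n} = \langle \mathbf{1}, Q^n \mathbf{1}\rangle_{L^2((-\infty,0], \gamma)}$ and that the sub-probability density of $X_n$ on $A_n$ is $f_n(u) = \gamma(u)(Q^n \mathbf{1})(u)$.

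The main step is the spectral analysis: I would show that $Q$ is a self-adjoint Hilbert--Schmidt operator on $L^2((-\infty,0], \gamma)$ whose symmetrised kernel $p(x,y)/\gamma(y)$ is strictly positive and continuous. Self-adjointness comes directly from detailed balance, and compactness reduces to the finiteness of $\iint_{(-\infty,0]^2} p(x,y)^2 \gamma(x)/\gamma(y) \, dx \, dy$. A short Gaussian computation shows that this integrand is the exponential of a negative-definite quadratic form whose associated matrix has eigenvalues $(1\pm\rho)/(2(1\mp\rho))$, both positive precisely because $\rho := e^{-\beta/2} < 1$. Jentzsch's theorem (the Perron--Frobenius theorem for strictly positive integral operators) then yields a simple leading eigenvalue $\mu > 0$, strictly dominating the remaining spectrum, together with a strictly positive eigenfunction $\psi$. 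This produces the expansion $Q^n \mathbf{1} = \mu^n \langle \mathbf{1}, \psi\rangle_\gamma \, \psi + o(\mu^n)$ in $L^2(\gamma)$; hence $\pr{}{A_n} \sim \mu^n \langle \mathbf{1}, \psi\rangle_\gamma^2$, identifying $\mu = e^{-\lambda_\beta}$. The same expansion shows that $f_n/\pr{}{A_n}$ converges in $L^1(\mathrm{Leb})$ to
\[
\varphi(u) := \frac{\gamma(u) \psi(u)}{\langle \mathbf{1}, \psi\rangle_\gamma},
\]
which gives the weak convergence $\pi_n \to \pi$ and identifies the limiting density. Using detailed balance once more, the relation $Q\psi = \mu\psi$ transforms into the announced Fredholm equation $e^{-\lambda_\beta}\varphi(u) = \int_{-\infty}^0 p(y,u) \varphi(y)\, dy$.

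For the monotonicity $\pr{}{X_n \leq 0 \mid A_{n-1}} \nearrow e^{-\lambda_\beta}$, convergence to $e^{-\lambda_\beta}$ is immediate since the ratio equals $\pr{}{A_n}/\pr{}{A_{n-1}}$. The monotonicity in $n$ follows from an FKG-type argument: the joint density of $(X_0, X_1, \dots)$ is MTP2 (its precision matrix is tridiagonal with non-positive off-diagonal entries), a property preserved under conditioning on lattice events such as $\{X_1 \leq 0, \dots, X_n \leq 0\}$. A short Bayes manipulation using this positive association shows that $\pi_{n+1} \preceq_{st} \pi_n$ (conditioning on the additional decreasing event $\{X_0 \leq 0\}$ makes $X_{n+1}$ stochastically smaller). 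Combined with the fact that the map $x \mapsto \pr{}{X_{n+1} \leq 0 \mid X_n = x} = \Phi(-\rho x/\sqrt{1-\rho^2})$ is strictly decreasing, this yields $\pr{}{X_{n+1} \leq 0 \mid A_n} \geq \pr{}{X_n \leq 0 \mid A_{n-1}}$.

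The main obstacle is the spectral analysis on the unbounded half-line: establishing the Hilbert--Schmidt bound (which essentially needs the full strength of $\rho < 1$) and invoking Perron--Frobenius in the form that delivers not merely existence of a positive eigenfunction but the spectral gap required for the sharp asymptotic $Q^n \mathbf{1}/\mu^n \to \langle \mathbf{1}, \psi\rangle_\gamma\, \psi$. Once these ingredients are in place, identification of $\mu$ with $e^{-\lambda_\beta}$, the weak convergence of $\pi_n$, the integral equation for $\varphi$, and the FKG-based monotonicity all follow by essentially routine manipulations.
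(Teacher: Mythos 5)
Your proposal is correct, but it takes a genuinely different route from the paper. The paper's argument is elementary: it sets $F_n(u)=\pr{}{X_n\leq u\mid A_{n-1}}$, shows $F_n(u)$ is nondecreasing in $n$ by a single application of a Gaussian correlation inequality (Lemma~5 of Bramson, 1978) together with stationarity, deduces convergence of $F_n$ and weak convergence of $\pi_n$, passes to the limit in the one-step recursion $F_n(u)=\int_{-\infty}^0\int_{-\infty}^u p(y,z)\,dz\,\pi_{n-1}(dy)$ to obtain the Fredholm equation, and identifies $\lambda_\beta$ from the product representation $\pr{}{A_N}=\tfrac12\prod_{n=1}^N F_n(0)=F(0)^N e^{o(N)}$. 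You instead symmetrise the chain by detailed balance, work with the operator $Q$ on the Gaussian-weighted space $L^2((-\infty,0],\gamma)$, verify it is Hilbert--Schmidt (your quadratic-form computation and the eigenvalues $(1\pm\rho)/(2(1\mp\rho))$ check out, since $\rho=e^{-\beta/2}<1$), and invoke Jentzsch's theorem to get a simple principal eigenvalue with a spectral gap; the monotonicity of $\pr{}{X_n\leq 0\mid A_{n-1}}$ you recover via an MTP2/FKG argument, which is essentially the same correlation inequality the paper imports from Bramson. Two remarks. First, your choice of space is exactly what makes the spectral route viable: the paper's own remark after the proposition observes that the forward operator on $L^1((-\infty,0],\mathrm{Leb})$ is \emph{not} compact and has spectral radius $1$, which is not an eigenvalue; there is no contradiction, because your $L^2(\gamma)$ is a different space and the conjugation $\varphi=\gamma\psi/\langle \mathbf{1},\psi\rangle_\gamma$ transports the principal eigenpair back to the $L^1$ setting. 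It would be worth saying this explicitly, since otherwise the reader may suspect a clash with that remark. Second, your approach buys strictly more than the statement requires: the spectral gap yields the sharp asymptotics $\pr{}{A_N}\sim c\,e^{-\lambda_\beta N}$ with $c>0$ and simplicity/uniqueness of the positive eigenfunction, as well as total-variation (not merely weak) convergence of $\pi_n$, whereas the paper's monotone-limit argument only gives $\pr{}{A_N}=e^{-\lambda_\beta N+o(N)}$; the price is the heavier functional-analytic machinery and the need to carry out the Hilbert--Schmidt and Jentzsch verifications carefully, which your sketch does identify as the main technical burden.
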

\begin{proof}
Let $F_n(u) := \pr{}{X_n \leq u \vert A_{n-1}}$, $u \leq 0$. Note that for $u \leq 0$
\begin{equation}\label{eq:conv_mu_proof}
   \pi_n((-\infty,u]) = \frac{\pr{}{X_n \leq u, A_{n-1}}}{\pr{}{A_n}} = \frac{\pr{}{X_n \leq u \vert A_{n-1}}}{\pr{}{ X_n \leq 0 \vert A_{n-1}}} = \frac{F_n(u)}{F_n(0)}.
\end{equation}
Moreover, for $u \leq 0$, we have
\begin{align*}
   F_n(u) &= \pr{}{X_n \leq u \vert A_{n-1}} = \int_{-\infty}^0 \pr{}{X_n \leq u \vert X_{n-1} = y} \pr{}{X_{n-1} \in dy \vert A_{n-1}} \\
&= \int_{-\infty}^0 \int_{-\infty}^u p(y,z) \, dz \, \pi_{n-1}(dy).
\end{align*}
Assume for a moment that $F_n(u)$ converges to $F(u)$ for all $u \leq 0$ and that $(\pi_n)_{n \geq 1}$ converges weakly to some probability measure $\pi$. Then the last equation and \eqref{eq:conv_mu_proof} imply that
\[
   \pi((-\infty,u]) = \frac{F(u)}{F(0)} = \frac{1}{F(0)} \, \int_{-\infty}^0 \int_{-\infty}^u p(y,z) \, dz \, \pi(dy), \quad u \leq 0.
\]
Applying Fubini's theorem, the previous equation reads
\begin{equation}\label{eq:stat_measure_proof}
   F(0) \, \pi((-\infty,u]) = \int_{-\infty}^u \int_{-\infty}^0 p(y,z) \, \pi(dy) \, dz, \quad u \leq 0.
\end{equation}
One can then conclude that $\pi$ is absolutely continuous w.r.t.\ to the Lebesgue measure. Denote its density by $\varphi$. Differentiating \eqref{eq:stat_measure_proof} w.r.t.\ to $u$, we conclude that
\begin{equation}\label{eq:stat_density_proof}
  F(0) \, \varphi(u) = \int_{-\infty}^0 p(y,u) \, \varphi(y) \,dy \quad \text{for all } u < 0.
\end{equation}
In order to prove convergence of $F_n(u)$ for $u \leq 0$, it suffices to show that $F_n(u)$ is non-decreasing in $n$. Indeed,
\begin{align*}
   F_{n+1}(u) &= \pr{}{X_{n+1} \leq u \vert X_0 \leq 0, \dots, X_n \leq 0} \\
&\geq \pr{}{X_{n+1} \leq u \vert X_1 \leq 0, \dots, X_n \leq 0} = F_n(u). 
\end{align*}
The inequality follows from Lemma 5 of \cite{bramson:1978}, the last equality is due to the stationarity of $X$. Using \eqref{eq:conv_mu_proof}, it is not hard to show that the sequence $(\pi_n)_{n \geq 0}$ converges weakly to some probability measure $\pi$. Next, since
\[
   F_n(0) = F(0) \, (1 + g(n)), \quad n \geq 0,
\]
where $g(n) \to 0$ as $n \to \infty$, we get
\begin{align*}
   \pr{}{\sup_{n=0,\dots,N} X_n \leq 0} &= \pr{}{X_N \leq 0 \vert A_{N-1}} \, \pr{}{A_{N-1}} \\
&= \pr{}{X_0 \leq 0} \prod_{n=1}^N \pr{}{X_n \leq 0 \vert A_{n-1}} \\
&= \frac{1}{2} F(0)^N \, \exp\left( \sum_{n=1}^N \log(1 + g(n)) \right) = F(0)^N \, e^{o(N)}.
\end{align*}
One concludes (recall \eqref{eq:exp_rate_gaussian}) that
\[
 \lim_{N \to \infty} -\frac{1}{N} \, \log \pr{}{\sup_{n=0,\dots,N} X_n \leq 0} = - \log F(0) = \lambda_\beta.
\]
\end{proof}
\begin{remark}
Proposition~\ref{prop:int_eq} shows that $\exp(-\lambda_\beta)$ is an eigenvalue corresponding to a positive eigenfunction $\varphi$ of the positive bounded linear operator 
\[
   T \colon L^1((-\infty,0]) \to L^1((-\infty,0]), \quad (Tf)(z) := \int_{-\infty}^0 p(y,z) f(y) \, dy, \quad z \leq 0.
\]
One might suspect that $\exp(-\lambda_\beta)$ is the largest spectral value of $T$, i.e.\ $\exp(-\lambda_\beta) = r(T)$ where $r(T)$ denotes the spectral radius of $T$. For instance, such a result holds for positive matrices (by Perron-Frobenius type results, see e.g.\ Corollary I.2.3 in \cite{schaefer:1974}). However, in our case, it can be shown that $r(T) = 1 > \exp(-\lambda_\beta)$. Also one can verify that $r(T)$ is not an eigenvalue of $T$. If $T$ were compact this could not occur, see e.g.\ Theorem V.6.6 in \cite{schaefer:1974}. It remains unclear if $\exp(-\lambda_\beta) \geq \abs{\mu}$ for every other eigenvalue $\mu$ of $T$. Results of this type are known (see e.g.\ Theorem~11.4 in \cite{k-l-s:1989}), but not applicable in our case.
\end{remark}
\section{Universality results}\label{sec:weighted_rw}
\subsection{Polynomial weight functions}
Let $X_1,X_2,\dots$ be a sequence of i.i.d.\ random variables such that $\E{}{X_1} = 0$ and $\E{}{X_1^2}=1$ and $\sigma \colon [0,\infty) \to (0,\infty)$ some measurable function. Let $Z$ denote the corresponding weighted random walk defined in \eqref{def:weighted_RW}. For a sequence $(X_n)_{n \geq 1}$ of standard normal random variables, the problem has already been solved for $\sigma(n) = n^p$. Indeed, the survival exponent is equal to $p+1/2$ in view of \eqref{eq:eq_distr_gaussian_wrw} and Theorem~\ref{thm:polynomial_case_BM} applied to the function $\kappa(\cdot)$ defined by $\kappa(n) = \sigma(1)^2 + \dots + \sigma(n)^2$ and
\[
   \kappa(N) \asymp N^{2p+1}, \qquad \kappa(N+1) - \kappa(N) = \sigma(N+1)^2 \asymp N^{2p}, \quad N \to \infty.
\]
It is a natural question to ask whether the same results holds for any sequence of random variables that obey a suitable moment condition. This is the subject of Theorem~\ref{thm:weighted_rw_lower_bound} and Theorem~\ref{thm:weighted_rw_upper_bound}.
\begin{remark}\label{rem:val_barrier_WRW}
Theorem~\ref{thm:weighted_rw_lower_bound} and Theorem~\ref{thm:weighted_rw_upper_bound} also hold if the barrier $0$ is replaced by any $c \in \mathbb{R}$. The proof of Theorem~\ref{thm:weighted_rw_upper_bound} can be easily modified to cover this case. We briefly indicate below how to adapt the proof of the lower bound. The proofs will be then carried out again for the barrier $1$ instead of $0$. \\
Let $c \in \mathbb{R}$. Take any $x > 0$ such that $\pr{}{X_1 \leq -x} > 0$. Choose $N_0$ such that $-x(\sigma(1) + \dots + \sigma(N_0)) \leq c-1$. On $A_0 := \set{X_1 \leq -x, \dots, X_{N_0} \leq -x}$, it holds that $Z_{N_0} \leq c - 1$ by construction. Then, for $N > N_0$,
\begin{align*}
 \pr{}{\sup_{n=1,\dots,N} Z_n \leq c} &\geq \pr{}{ A_0, \sup_{n=N_0+1,\dots,N} Z_n - Z_{N_0} \leq 1} \\
 & = \pr{}{A_0} \pr{}{\sup_{n=1,\dots,N-N_0} \sum_{k=1}^n \sigma(k+N_0) X_k \leq 1}.
\end{align*}
Hence, it suffices to prove a lower bound for the survival probability of the weighted random walk $\tilde{Z}$ with $\tilde{\sigma}(k) := \sigma(k+N_0)$ ($k \geq 1$) and the barrier $1$ since $\tilde{\sigma}(N) \asymp \sigma(N) \asymp N^p$.
\end{remark}
\subsubsection{Lower bound via Skorokhod embedding}
Here we prove the lower bound of Theorem~\ref{thm:overview_2} under slightly weaker assumptions.
\begin{theorem}\label{thm:weighted_rw_lower_bound}
	Let $(X_n)_{n \geq 1}$ be a sequence of i.i.d.\ centered random variables such that $\E{}{X_1^2} = 1$. Denote by $Z = (Z_n)_{n \geq 1}$ the corresponding weighted random walk defined in \eqref{def:weighted_RW}. Let $\sigma(N) \asymp N^p$ for some $p > 0$. Assume that $\E{}{\abs{X_1}^{\alpha}} < \infty$ for some $\alpha > 4p+2$.  Then 
   \[
      \pr{}{\sup_{n=1,\dots,N} Z_n \leq 0 } \succsim  N^{-(p+1/2)}, \quad N \to \infty.
   \]
\end{theorem}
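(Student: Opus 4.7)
The plan is to realize the weighted random walk as a Brownian motion sampled along a random clock via Skorokhod embedding, and then to compare with the continuous-time Brownian survival probability modulo a tail estimate for the clock. By Remark~\ref{rem:val_barrier_WRW}, it suffices to prove $\pr{}{\sup_{n=1,\dots,N} Z_n \leq 1} \succsim N^{-(p+1/2)}$.

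On a common probability space I would construct a standard Brownian motion $B$ and an increasing sequence $0 = T_0 < T_1 < T_2 < \dots$ of stopping times by applying Skorokhod's embedding successively: given $T_{k-1}$, embed $\sigma(k) X_k$ into the post-$T_{k-1}$ Brownian motion $B(T_{k-1} + \cdot) - B(T_{k-1})$. The strong Markov property then makes the increments $\nu_k := T_k - T_{k-1}$ independent with $\E{}{\nu_k} = \sigma(k)^2$, while $(B(T_n))_{n \geq 1} \stackrel{d}{=} (Z_n)_{n \geq 1}$. The standard moment bound for Skorokhod stopping times gives $\E{}{\nu_k^{\alpha/2}} \leq C\, \sigma(k)^\alpha\, \E{}{\abs{X_1}^\alpha} \leq C\, k^{\alpha p}$.

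Set $\kappa(N) := \E{}{T_N} = \sum_{k=1}^N \sigma(k)^2 \asymp N^{2p+1}$ and fix $C$ so large that $\kappa(N) \leq (C/2) N^{2p+1}$ for all large $N$. On $\set{T_N \leq C N^{2p+1}}$ we have $\sup_{n \leq N} B(T_n) \leq \sup_{t \in [0, C N^{2p+1}]} B_t$, hence
\begin{align*}
  \pr{}{\sup_{n=1,\dots,N} Z_n \leq 1} &\geq \pr{}{\sup_{t \in [0,C N^{2p+1}]} B_t \leq 1} - \pr{}{T_N > C N^{2p+1}}.
\end{align*}
The first term is asymptotic to $c\, N^{-(p+1/2)}$ by the reflection principle. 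For the second, I would apply Rosenthal's inequality to the independent centered summands $\nu_k - \sigma(k)^2$ with exponent $r = \alpha/2$, obtaining $\E{}{\abs{T_N - \kappa(N)}^r} \leq C_r\, N^{(4p+1)r/2}$; Markov's inequality then gives $\pr{}{T_N - \kappa(N) > (C/2) N^{2p+1}} \precsim N^{-r/2}$. The hypothesis $\alpha > 4p+2$ is exactly $r/2 > p + 1/2$, so this tail is $o(N^{-(p+1/2)})$ and is absorbed into the Brownian term.

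The main obstacle is this concentration step: the exponent $\alpha > 4p+2$ is tuned so that a Markov estimate with the $\alpha/2$-th moment of $T_N - \kappa(N)$ just beats the rate $N^{-(p+1/2)}$. When $\alpha < 4$ (which forces $p < 1/2$) the $r \geq 2$ form of Rosenthal is unavailable, and a truncation of the $\nu_k$ near level $N^{2p+1}$ combined with the von Bahr--Esseen inequality for $r \in (1,2)$ is needed to close the gap. A minor point is ensuring that the $\nu_k$ are truly independent on a single Brownian motion, which is handled by the successive strong Markov application above.
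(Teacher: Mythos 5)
Your proposal follows essentially the same route as the paper's proof: Skorokhod embedding of $Z$ into a Brownian motion with independent stopping-time increments of mean $\sigma(k)^2$, comparison with $\pr{}{\sup_{t \in [0,cN^{2p+1}]} B_t \leq 1} \asymp N^{-(p+1/2)}$, and a Chebychev/Markov bound on $\pr{}{\tau(N) - \kappa(N) > c N^{2p+1}}$ based on a centered-moment estimate of order $N^{(4p+1)r/2}$ (the paper obtains it via BDG plus the Marcinkiewicz--Zygmund inequality and the $L^{\gamma/2}$ triangle inequality where you invoke Rosenthal, an immaterial difference). The $\alpha<4$ caveat you flag is genuine but is equally present in the paper's own argument, which likewise needs $\gamma \geq 2$ at that step.
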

\begin{proof}
\textbf{Step 1:} Since the $X_i$ are independent centered random variables, $Z$ is a martingale, and one can use a Skorokhod embedding: there exists a Brownian motion $B$ and an increasing sequence of stopping times $(\tau(n))_{n \in \mathbb{N}}$ such that $(Z_n)_{n \in \mathbb{N}}$ and $(B_{\tau(n)})_{n \in \mathbb{N}}$ have the same finite dimensional distributions. Moreover, 
\[
	\E{}{\tau(N)} = \E{}{B_{\tau(N)}^2} = \E{}{Z_N^2} = \sum_{k=1}^N \sigma(k)^2 =: \kappa(N),
\]
see e.g.\ Proposition 11.1 in the survey on the Skorokhod problem of \cite{obloj:2004}. In particular, this implies that $(B_{t \wedge \tau(n)})_{t \geq 0}$ is uniformly integrable.\\
From the contruction of the stopping times described in the cited article (Section 11.1), one deduces that the increments of $(\tau(n))_{n \geq 1}$ are independent since those of $Z$ are.\\
Note that there exist constants $c_1,c_2 > 0$ such that $c_1 N^{2p+1} \leq \kappa(N) \leq c_2 N^{2p+1}$ for all $N$ sufficiently large. W.l.o.g.\ assume that $c_2$ is so large that the upper bound holds for all $N$. Then one has for $\epsilon > 0$ and $N$ large enough
\begin{align}\label{eq:proof_monroe_embedding}\notag
   \pr{}{\sup_{n=1,\dots,N} Z_n \leq 1 } &= \pr{}{\sup_{n=1,\dots,N} B_{\tau(n)} \leq 1 } \\ 
&\geq \pr{}{\sup_{t \in [0,(1+ \epsilon) \kappa(N)]} B_t \leq 1, \, \tau(N) \leq (1 + \epsilon) \kappa(N)} \\ \notag
&\geq \pr{}{\sup_{t \in [0,(1+ \epsilon)c_2 N^{2p+1}]} B_t \leq 1} - \pr{}{\tau(N) - \kappa(N) > \epsilon c_1  N^{2p+1}}.
\end{align}
Clearly,
\begin{equation}\label{eq:proof-monroe-embed1}
   \pr{}{\sup_{t \in [0,(1 + \epsilon)c_2 N^{2p+1}]} B_t \leq 1} \sim \sqrt{\frac{2}{\pi(1+ \epsilon)c_2}} \, N^{-(p+1/2)}, \quad N \to \infty.
\end{equation}
The second term in \eqref{eq:proof_monroe_embedding} may be estimated with Chebychev's inequality if one can control the centered moments of the stopping times $\tau(N)$. Concretely, we claim that for all $N$ and $\gamma \geq 2$ such that $\E{}{\abs{X_1}^{2\gamma}} < \infty$, it holds that
\begin{equation}\label{ineq:moment_monroe_embed}
	\E{}{\abs{\tau(N) - \kappa(N)}^\gamma} = \E{}{\abs{\tau(N) - \E{}{\tau(N)}}^\gamma} \leq C N^{(2p +1/2)\gamma},
\end{equation}
where $C > 0$ is some constant depending only on $\gamma$. If \eqref{ineq:moment_monroe_embed} is true, Chebychev's inequality yields
\begin{align*}
   \pr{}{\tau(N) - \kappa(N) > \epsilon c_1  N^{2p+1}} &\leq \E{}{\abs{\tau(N) - \kappa(N)}^\gamma} \, (\epsilon c_1)^{-\gamma} \,N^{-\gamma (2p + 1)} \\
&\leq  C\,(c_1 \epsilon)^{-\gamma} N^{-\gamma/2}.
\end{align*}
By choosing $\gamma > 2p + 1$, this term is of lower order than $N^{-(p+1/2)}$. The assertion of the proposition follows from \eqref{eq:proof_monroe_embedding}, \eqref{eq:proof-monroe-embed1}, and Remark~\ref{rem:val_barrier_WRW}.\\
\textbf{Step 2:} It remains to verify the validity of \eqref{ineq:moment_monroe_embed}. Choose $\gamma > 2p + 1$ such that $\E{}{\abs{X_1}^{2\gamma}} < \infty$. Since $(B_{\tau(n) \wedge t})_{t \geq 0}$ is uniformly integrable, we deduce from the Burkholder-Davis-Gundy (BDG) inequality (see Proposition 2.1 of \cite{obloj:2004})  that
\[
   \E{}{ \tau(n)^\gamma } \leq C(\gamma) \E{}{ \abs{B_{\tau(n)}}^{2\gamma} } = C(\gamma) \E{}{\abs{Z_n}^{2\gamma}} < \infty.
\]
The finiteness of the last expectation follows from our choice of $\gamma$ and the assumption $\E{}{\abs{X_1}^{2\gamma}} < \infty$. This shows that $\tau(n)^\gamma$ is integrable. \\
Recall that 
\[
	\tilde{B} = (B_{t + \tau(n-1)} - B_{\tau(n-1)})_{t \geq 0}
\]
 is a Brownian motion w.r.t.\ the filtration $\mathbb{G}^{(n)} = (\mathcal{G}^{(n)}_t)_{t \geq 0} := (\mathcal{F}_{t + \tau(n-1)})_{t \geq 0}$ if $B$ is a Brownian motion w.r.t.\ $(\mathcal{F}_t)_{t \geq 0}$. Note that $\tau(n) - \tau(n-1)$ is a $\mathbb{G}^{(n)}$-stopping time for all $n$. Using again the BDG inequality, we get
\begin{align}
   \E{}{(\tau(n) - \tau(n-1))^\gamma} &\leq c_\gamma \, \E{}{\abs{\tilde{B}_{\tau(n) - \tau(n-1)}}^{2\gamma}} \notag \\
&= c_\gamma \, \E{}{\abs{B_{\tau(n)} - B_{\tau(n-1)}}^{2\gamma}} \notag \\
&= c_\gamma \, \E{}{\abs{Z_n - Z_{n-1}}^{2\gamma}} = c_\gamma \, \E{}{\abs{X_1}^{2\gamma}} \sigma(n)^{2\gamma}, \label{ineq:incr_monroe_stop_times}
\end{align}
where $c_\gamma$ is a constant depending on $\gamma$ only and $\E{}{\abs{X_1}^{2\gamma}} < \infty$ by assumption.
For $n = 1,2,\dots$, let 
\[
   Y_n := \tau(n) - \tau(n-1) - \E{}{\tau(n) - \tau(n-1)} = \tau(n) - \tau(n-1) - \sigma(n)^2.
\]
As remarked at the beginning of the proof, the $Y_i$ are independent centered random variables. Using the Marcinkiewicz-Zygmund inequality (or the BDG-inequality), we get
\begin{align*}
   \E{}{\abs{\tau(N) - \kappa(N)}^\gamma} &= \E{}{\abs{\sum_{n=1}^N Y_n}^\gamma} \leq C(\gamma) \E{}{\left(\sum_{n=1}^N Y_n^2 \right)^{\gamma/2}} \\
&= C(\gamma) \norm{\sum_{n=1}^N Y_n^2}_{\gamma/2}^{\gamma/2},
\end{align*}
where $C(\gamma)$ is again some constant that depends only on $\gamma$ and $\norm{\cdot}_p$ denotes the $L^p$-norm (here we need that $\gamma \geq 2$). An application of the triangle inequality yields
\[
   \left( \E{}{\abs{\tau(N) - \kappa(N)}^\gamma} \right)^{2/\gamma} \leq C(\gamma)^{2/\gamma} \sum_{n=1}^N \norm{Y_n^2}_{\gamma/2} = C(\gamma)^{2/\gamma} \sum_{n=1}^N \left( \E{}{ \abs{Y_n}^\gamma} \right)^{2/\gamma}.
\]
Clearly $\abs{Y_n}^\gamma \leq 2^{\gamma}( \abs{\tau(n) - \tau(n-1)}^\gamma + \sigma(n)^{2\gamma})$ implying that
\begin{align*}
   \E{}{\abs{\tau(N) - \kappa(N)}^\gamma}^{2/\gamma} &\leq 4C(\gamma)^{2/\gamma}  \sum_{n=1}^N \left( \E{}{ \abs{\tau(n) - \tau(n-1)}^\gamma}  + \sigma(n)^{2\gamma} \right)^{2/\gamma} \\
&\leq 4C(\gamma)^{2/\gamma}  \sum_{n=1}^N \left( (  c_\gamma \E{}{\abs{X_1}^{2\gamma}} + 1) \sigma(n)^{2\gamma}\right)^{2/\gamma} \\
&\leq 4\left\lbrace C(\gamma) (  c_\gamma \E{}{\abs{X_1}^{2\gamma}} + 1)\right\rbrace^{2/\gamma}  \sum_{n=1}^N \sigma(n)^{4}.
\end{align*}
In the above estimates, the second inequality follows from \eqref{ineq:incr_monroe_stop_times}. We finally arrive at 
\begin{align*}
	\E{}{\abs{\tau(N) - \kappa(N)}^\gamma} &\leq  2^{\gamma} C(\gamma) (  c_\gamma \E{}{\abs{X_1}^{2\gamma}} + 1) \left(\sum_{n=1}^N \sigma(n)^4 \right)^{\gamma/2} \\
&\leq 2^{\gamma} C(\gamma) (  c_\gamma \E{}{\abs{X_1}^{2\gamma}} + 1) \, c_2^{2\gamma} \, N^{(4p+1)\gamma/2},
\end{align*}
proving \eqref{ineq:moment_monroe_embed} with $C = 2^{\gamma} C(\gamma) (  c_\gamma \E{}{\abs{X_1}^{2\gamma}} + 1) c_2^{2\gamma}$.
\end{proof}

\subsubsection{Upper bound via coupling}
The upper bound in Theorem~\ref{thm:overview_2} is a consequence of the following more precise statement.
\begin{theorem}\label{thm:weighted_rw_upper_bound}
   Let $(X_n)_{n \geq 1}$ be a sequence of i.i.d.\ centered random variables such that $\E{}{X_1^2} = 1$. Denote by $Z = (Z_n)_{n \geq 1}$ the corresponding random walk defined in \eqref{def:weighted_RW} and assume that $\E{}{e^{a \abs{X_1} }} < \infty$ for some $a > 0$. Let $\sigma$ be increasing such that $\sigma(N) \asymp N^p$ for some $p > 0$. Then for any $\rho > 4p + 2$
\[
   \pr{}{\sup_{n=1,\dots,N} Z_n \leq 0} \precsim N^{-(p+1/2)} (\log N)^{\rho /2}, \quad N \to \infty.
\]
\end{theorem}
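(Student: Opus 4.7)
The plan is to couple $(Z_n)$ with a Gaussian weighted random walk via the Hungarian construction of \cite{k-m-t:1976}, convert the event $\set{\sup Z_n\leq 1}$ into a Brownian survival event under a moving boundary of order $\sigma(n)\log N$, and then finish by reproducing the scheme of Theorem~\ref{thm:polynomial_case_BM}. By Remark~\ref{rem:val_barrier_WRW} we work with the barrier $1$. The exponential moment assumption allows one to construct the $X_k$ jointly with a standard Brownian motion $W$ such that, writing $S_n:=X_1+\dots+X_n$,
\[
   \pr{}{\max_{n\leq N}\abs{S_n - W_n}>C_1\log N+x}\leq K_1 e^{-\lambda_1 x},\qquad x\geq 0,
\]
for constants depending only on the law of $X_1$. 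Setting $\tilde Z_n:=\sum_{k=1}^n\sigma(k)(W_k-W_{k-1})$, one has $(\tilde Z_n)\stackrel{d}{=}(B_{\kappa(n)})$, and Abel summation gives
\[
   Z_n-\tilde Z_n = \sigma(n)(S_n-W_n)-\sum_{k=1}^{n-1}(\sigma(k+1)-\sigma(k))(S_k-W_k),
\]
so the monotonicity of $\sigma$ and telescoping yield $\abs{Z_n-\tilde Z_n}\leq 2\sigma(n)(C_1\log N+x)$ for every $n\leq N$ on the KMT event. Taking $x=c\log N$ with $c$ large makes the complement of probability $K_1 N^{-c\lambda_1}$, which is negligible compared to the target.

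On the KMT event, $\set{\sup_{n\leq N} Z_n\leq 1}$ forces $\tilde Z_n\leq C_2 n^p\log N$ for every $n\leq N$, so the proof reduces to showing
\[
   \pr{}{B_{\kappa(n)}\leq C_2 n^p\log N,\ \forall n\leq N} \precsim N^{-(p+1/2)}(\log N)^{\rho/2}.
\]
Fix $\gamma\in(p,p+1/2)$, set $g(N)=\lceil(K\log N)^{1/(2\gamma-2p)}\rceil$, and include the event above in $G_N\cup H_N$ where
\begin{align*}
   G_N &:= \bigcap_{n=g(N)}^{N-1}\set{\sup_{t\in[\kappa(n),\kappa(n+1)]} B_t \leq C_2 n^p\log N + n^\gamma+1},\\
   H_N &:= \bigcup_{n=g(N)}^{N-1}\set{\sup_{t\in[\kappa(n),\kappa(n+1)]}(B_t - B_{\kappa(n)}) > n^\gamma}.
\end{align*}
Using $\kappa(n+1)-\kappa(n)=\sigma(n+1)^2\asymp n^{2p}$ together with the Gaussian tail \eqref{eq:tails_of_BM}, the treatment of $H_N$ from the proof of Theorem~\ref{thm:polynomial_case_BM} goes through verbatim and yields $\pr{}{H_N}\leq N^{1-K/(2c)}=o(N^{-(p+1/2)})$ for $K$ large.

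For $G_N$, the inequality $n\leq (t/c_1)^{1/(2p+1)}$ on $\set{t\geq\kappa(n)}$ embeds $G_N$ in $\set{B_t\leq F(t)+1,\ \forall t\in[\kappa(g(N)),\kappa(N)]}$ for the drift
\[
   F(t) := C_2(t/c_1)^{p/(2p+1)}\log N + (t/c_1)^{\gamma/(2p+1)}.
\]
Slepian's inequality then provides the familiar ratio bound
\[
   \pr{}{G_N}\leq \frac{\pr{}{\sup_{t\in[0,\kappa(N)]} B_t - F(t)\leq 1}}{\pr{}{\sup_{t\in[0,\kappa(g(N))]} B_t - F(t)\leq 1}},
\]
and since both exponents $p/(2p+1)$ and $\gamma/(2p+1)$ are strictly below $1/2$, Uchiyama's Theorem~5.1 applies to $F$, producing numerator $\asymp N^{-(p+1/2)}$ and denominator at least $\asymp\kappa(g(N))^{-1/2}\asymp(\log N)^{-(2p+1)/(4\gamma-4p)}$, each times a constant that depends on $F$. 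The main obstacle is to verify that this Uchiyama constant is at most polynomial in the coefficient $\log N$ of $F$; once this is done, the combined logarithmic factors can be absorbed into $(\log N)^{\rho/2}$ by choosing $\gamma$ close enough to $p+1/2$, the hypothesis $\rho>4p+2$ supplying precisely the slack needed to accommodate both the denominator and the drift-dependent Uchiyama constant.
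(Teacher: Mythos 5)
Your overall strategy is the same as the paper's: KMT coupling of $S$ with a Gaussian partial-sum process, Abel summation to get $\abs{Z_n-\tilde Z_n}\leq 2\sigma(n)\sup_{k\leq n}\abs{S_k-W_k}$, reduction to a Brownian motion staying below a moving boundary of order $\sigma(n)\log N$, and then a Slepian ratio bound combined with Uchiyama's Theorem~5.1. The coupling step, the Abel estimate, and the treatment of $H_N$ (which you handle with an $n^\gamma$ threshold and a cutoff $g(N)$ instead of the paper's threshold $C\sigma(n)\log N$ starting from $n=1$; both work) are fine.

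The gap is exactly where you flag it, and it is not a minor verification: your boundary $F(t)=C_2(t/c_1)^{p/(2p+1)}\log N+(t/c_1)^{\gamma/(2p+1)}$ depends on $N$ through the coefficient $\log N$, whereas Uchiyama's theorem is an asymptotic statement ($T\to\infty$) for a \emph{fixed} boundary, with implied constants that depend on that boundary in an uncontrolled way. You would need a quantitative version of Uchiyama's result showing that the constant in $\pr{}{B_t\leq ct^{\alpha}+1,\ \forall t\leq T}\asymp T^{-1/2}$ grows at most polynomially in $c$, uniformly as $c=\log N\to\infty$; nothing of the sort is cited or proved, and it is not obvious (the natural bounds on such constants grow much faster in $c$). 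The paper sidesteps the problem entirely: since $\rho>2(2p+1)$ gives $p/(2p+1)+1/\rho<1/2$, one has $t^{p/(2p+1)}\log N\leq t^{p/(2p+1)+1/\rho}$ for all $t\geq(\log N)^{\rho}$, so after restricting the Slepian ratio to the interval $[(\log N)^{\rho},\kappa(N)]$ the boundary becomes the $N$-independent function $c_2t^{p/(2p+1)+1/\rho}$, Uchiyama applies with absolute constants, and the denominator $\pr{}{\sup_{t\in[0,(\log N)^{\rho}]}B_t-c_2t^{p/(2p+1)+1/\rho}\leq 1}\asymp(\log N)^{-\rho/2}$ produces precisely the stated factor $(\log N)^{\rho/2}$. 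Without this (or some substitute uniform-in-$N$ estimate), your final step does not go through, and your bookkeeping of how "$\rho>4p+2$ supplies the slack" cannot be completed.
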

\begin{proof}
   Let $\tilde{Z}_n := \sum_{k=1}^n \sigma(k) \tilde{X_k}$ where the $\tilde{X}_k$ are independent standard normal random variables constructed on the same probability space as the $X_k$. As usual, denote by $S_n = X_1 + \dots + X_n$ the corresponding random walk and define $\tilde{S}$ analogously. Let 
\[
   E_N := \set{ \sup_{n=1,\dots,N} \abs{S_n - \tilde{S}_n} \leq C \log N}
\]
for some constant $C > 0$ to be specified later. We now use a coupling of the random walks $S$ and $\tilde{S}$ that allows us to work with the Gaussian process $\tilde{Z}$ instead of the original process $Z$. Since $\E{}{e^{a \abs{X_1} }} < \infty$ for some $a > 0$, we may assume by Theorem~1 of \cite{k-m-t:1976} that the sequences $(X_n)_{n \geq 1}$ and $(\tilde{X}_n)_{n \geq 1}$ are constructed on a common probability space such that for all $N$ and some $C > 0$ sufficiently large
\begin{equation}\label{ineq:kmt_coupling}
   \pr{}{E_N^c} = \pr{}{\sup_{n=1,\dots,N} \abs{S_n - \tilde{S}_n} > C \log N} \leq K \, N^{-(p+1/2)}
\end{equation}
where $K$ is a constant that depends only on the distribution of $X_1$ and $C$. \\
On $E_N$ one has in view of Abel's inequality (see Lemma 2.1 in \cite{shao:1995} and recall that $\sigma(\cdot)$ is increasing) that for all $n \leq N$ 
\begin{align}
   \sup_{k=1,\dots,n} \abs{Z_k - \tilde{Z}_k} &= \sup_{k=1,\dots,n} \abs{\sum_{j=1}^n \sigma(j) (X_j - \tilde{X}_j)} \\ \notag
&\leq 2 \, \sigma(n) \sup_{k=1,\dots,n} \abs{S_k - \tilde{S}_k} 
\leq 2\, C\, \sigma(n) \log N .
\end{align}
Therefore, on $E_N \cap \set{\sup_{n=1,\dots,N} Z_n \leq 1}$, one has
\[
   \tilde{Z}_n = \tilde{Z_n} - Z_n + Z_n \leq 2 C \sigma(n) \log N + 1, \quad n \leq N.
\]
We may now estimate
\begin{align*}
   &\pr{}{\sup_{n=1,\dots,N} Z_n \leq 1} \leq \pr{}{\sup_{n=1,\dots,N} Z_n \leq 1, E} + \pr{}{E_N^c} \\
&\quad \leq \pr{}{\sup_{n=1,\dots,N} \tilde{Z}_n -  2 C \sigma(n) \log N \leq 1} + \pr{}{E_N^c}.
\end{align*}
In view of \eqref{ineq:kmt_coupling}, the term $\pr{}{E_N^c}$ is at most of order $N^{-(p+1/2)}$. It remains to show that the order of the first term is $N^{-(p+1/2)} (\log N)^{\rho /2}$ for $\rho > 4p + 2$. Let $\kappa(n) := \sigma(1)^2 + \dots + \sigma(n)^2$. If $B$ is a Brownian motion, one has in view of \eqref{eq:eq_distr_gaussian_wrw} that
\[
   \pr{}{\sup_{n=1,\dots,N} \tilde{Z}_n -  2 C \sigma(n) \log N \leq 1 } = \pr{}{\sup_{n=1,\dots,N} B_{\kappa(n)} - 2 C \sigma(n) \log N \leq 1}.
\]
One can now proceed similarly to the proof of Theorem~\ref{thm:polynomial_case_BM}. Note that
\begin{align*}
  &\bigcap_{n=1}^N \set{B_{\kappa(n)} - 2 C \sigma(n) \log N \leq 1} \subseteq \bigcap_{n=1}^{N-1} \set{\sup_{t \in [\kappa(n),\kappa(n+1)]} B_t - 3 C \sigma(n) \log N \leq 1}   \\
&\quad \cup \bigcup_{n=1}^{N-1}\set{\sup_{t \in [\kappa(n),\kappa(n+1)]} B_t - B_{\kappa(n)} > C \sigma(n) \log N } =: G_N \cup H_N.
\end{align*}
Clearly,
\[
   \pr{}{H_N} \leq \sum_{n=1}^{N-1} \pr{}{\sup_{t \in [0,1]} B_t > \frac{C \sigma(n) \log N}{\sqrt{\kappa(n+1) - \kappa(n)}}} \leq N \, \pr{}{\sup_{t \in [0,1]} B_t > \tilde{C} \log N}
\]
where $\tilde{C} = C \inf \set{ \sigma(n) / \sigma(n+1) : n \geq 1} \in (0,C)$ since $\sigma(\cdot)$ is increasing and $\sigma(n) \asymp n^p$. It is easy to show that the last term of the preceding inequality is $o(N^{-\alpha})$ for any $\alpha > 0$, see the proof of Theorem~\ref{thm:polynomial_case_BM}.\\
It remains to estimate $\pr{}{G_N}$. Set $c_1 = \inf \set{ \kappa(n) / n^{2p+1} : n \geq 1} \in (0, \infty)$ since $\sigma(n) \asymp n^p$ and $\sigma(n) > 0$ for all $n \geq 1$ by monotonicity. Hence, $\kappa(n) \geq c_1 n^{2p+1}$ and $t \geq \kappa(n)$ implies that $(t/c_1)^{1/(2p+1)} \geq n$ and therefore,
\begin{align*}
   \pr{}{G_N} &\leq \pr{}{ \bigcap_{n=1}^{N-1} \set{\sup_{t \in [\kappa(n),\kappa(n+1)]} B_t - 3 C \sigma\left( (t/c_1)^{1/(2p+1)}\right) \log N \leq 1} } \\
&\leq  \pr{}{  \sup_{t \in [\kappa(1),\kappa(N)]} B_t - c_2  t^{p/(2p+1)} \log N \leq 1 }.
\end{align*}
Choose $\rho > 2(2p + 1)$, i.e.\ $1/\rho + p/(2p + 1) < 1/2$. Then $t^{p/(2p+1)} \log N \leq t^{p/(2p+1) + 1/\rho}$ for $t \geq (\log N)^{\rho}$ and 
\[
   \pr{}{G_N} \leq  \pr{}{ \sup_{t \in [(\log N)^\rho,\kappa(N)]} B_t - c_2  t^{p/(2p+1) + 1/\rho}  \leq 1 }.
\]
By Slepian's inequality, we have
\begin{align*}
   \pr{}{G_N} &\leq  \frac{\pr{}{ \sup_{t \in [0,\kappa(N)]} B_t - c_2  t^{p/(2p+1) + 1/\rho} \leq 1 }}{\pr{}{ \sup_{t \in [0,(\log N)^\rho]} B_t - c_2  t^{p/(2p+1) + 1/\rho} \leq 1} }.
\end{align*}
As already remarked in the proof of Theorem~\ref{thm:polynomial_case_BM}, the results of \cite{uchiyama:1980} imply that adding a drift of order $t^\alpha$ ($\alpha < 1/2$) to a Brownian motion does not change the rate $T^{-1/2}$. Since $p/(2p+1) + 1/\rho < 1/2$ by the choice of $\rho$, this implies that
\begin{align*}
   \pr{}{G_N} &\precsim \frac{\pr{}{  \sup_{t \in [0,\kappa(N)]} B_t \leq 1} }{ \pr{}{ \sup_{t \in [0,(\log N)^\rho]} B_t \leq 1} } \asymp \kappa(N)^{-1/2} \, (\log N)^{\rho/2} \\
&\asymp N^{-(p+1/2)} (\log N)^{\rho/2}.
\end{align*}
\end{proof}
 \begin{remark}
    We applied the Koml\'{o}s-Major-Tusn\'{a}dy coupling to the random walk $S$ whose increments $X_i$ are i.i.d. If the $X_i$ are independent, but not necessarily identically distributed, one could use the coupling for non-i.i.d.\ random variables introduced by \cite{sakhanenko:1984}: 
\begin{theorem}\label{thm:sakhanenko_coupling}(\cite{sakhanenko:1984}) Assume that the $X_n$ are independent centered random variables and that there is $\lambda > 0$ such that for all $n$
\begin{equation}\label{ineq:sakhanenko_coupling}
   \lambda \, \E{}{e^{\lambda X_n} \abs{X_n}^3} \leq \E{}{X_n^2}. 
\end{equation}
Then for some absolute constant $A >0$
\[
      \pr{}{\sup_{n=1,\dots,N} \abs{S_n - \tilde{S}_n} > C \log N} \leq \left(1 + \lambda \sum_{n=1}^N \E{}{X_n^2}\right) \, N^{-\lambda A  \,  C}, \quad N \geq 1.
\]
\end{theorem}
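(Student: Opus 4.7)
The plan is to follow Sakhanenko's dyadic coupling strategy, which extends the Koml\'os--Major--Tusn\'ady construction from the i.i.d.\ case to independent non-identically distributed summands. The construction proceeds on a hierarchy of dyadic blocks so that the maximal discrepancy between $S_n$ and $\tilde S_n$ is controlled uniformly in $n \leq N$ rather than just at the endpoint.

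First I would establish a one-variable coupling lemma: for each index $n$, produce a coupling of $X_n$ with a centered Gaussian $\tilde X_n$ of variance $\sigma_n^2 := E[X_n^2]$ on a common probability space such that the exponential moment $E[e^{\lambda |X_n - \tilde X_n|}]$ admits a sharp bound of the form $1 + O(\lambda^2 \sigma_n^2)$. The hypothesis \eqref{ineq:sakhanenko_coupling} enters crucially here: the quantity $\lambda E[e^{\lambda X_n} \abs{X_n}^3]$ is exactly the cubic correction in the comparison between the log-moment generating functions of $X_n$ and of a matching Gaussian, so controlling it by $\sigma_n^2$ is precisely what forces the one-step error to have a Gaussian-like exponential tail with the right scale. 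The coupling itself would be produced via a conditional quantile / smoothing construction against a Brownian bridge, rather than a naive product coupling (which cannot reach the logarithmic rate).

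Next I would assemble these couplings hierarchically. At the coarsest scale, pair the total sum $S_N$ with a Gaussian of matching variance $\sum_{n=1}^N \sigma_n^2$; then recursively split, constructing on each dyadic half a coupling conditional on the already-fixed block sum using the one-step lemma applied to the conditional increment distributions. This yields a joint law for $(X_n)_{n\leq N}, (\tilde X_n)_{n\leq N}$ whose partial sums agree simultaneously at every dyadic grid point up to controlled errors. Bounding $\pr{}{\sup_{n\leq N} \abs{S_n - \tilde S_n} > C \log N}$ then reduces to a union bound over the $O(\log N)$ dyadic scales combined with an exponential Chebyshev inequality at parameter $\lambda$; summing the one-step log-MGF bounds produces the prefactor $1 + \lambda \sum_{n=1}^N E[X_n^2]$, and the threshold $C\log N$ converts into the decay rate $N^{-\lambda A C}$ for some absolute $A>0$.

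The main obstacle is the consistent construction of couplings across dyadic scales. A plain product coupling yields only a polynomial (essentially $\sqrt N$) rate, so one needs a joint law under which the block-sum coupling at every scale simultaneously inherits the sharp one-step exponential-moment estimate. This requires a careful conditional-distribution argument: when one fixes a block sum $S_{[a,b]}$ together with its Gaussian partner, the conditional distribution of a sub-block sum has to be re-coupled to a conditional Gaussian while still satisfying the analogue of \eqref{ineq:sakhanenko_coupling}. Propagating the moment hypothesis through these conditionings --- so that the constant $A$ is absolute and the exponent $\lambda A C$ does not degrade across the $\log N$ levels of the hierarchy --- is the delicate technical heart of the argument.
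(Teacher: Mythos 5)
The paper does not actually prove this statement: it is imported verbatim from \cite{sakhanenko:1984} and used as a black box in a remark, so there is no internal argument to compare yours against. Judged on its own terms, your text is a reasonable description of the Koml\'os--Major--Tusn\'ady/Sakhanenko program, but it is an outline of a proof rather than a proof. Two concrete gaps stand out. First, the ``one-variable coupling lemma'' you start from (coupling each $X_n$ separately with a Gaussian $\tilde X_n$ so that $\E{}{e^{\lambda\abs{X_n-\tilde X_n}}}\leq 1+O(\lambda^2\E{}{X_n^2})$) cannot by itself yield the $\log N$ threshold: per-coordinate couplings make the errors accumulate linearly in $N$, and the logarithmic rate comes only from quantile-coupling inequalities for \emph{block sums} and, in the recursion, for the conditional law of a half-block sum given the block total. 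You acknowledge this in your last paragraph, but the argument as written still rests on the per-index lemma whose role in the final bound is never made precise.

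Second, and more seriously, the quantitative core is asserted rather than established: you never show how hypothesis \eqref{ineq:sakhanenko_coupling} translates into a uniform quantile-coupling estimate for the (conditional) block-sum distributions at every dyadic scale, nor why the resulting constant $A$ is absolute and does not degrade over the $O(\log N)$ levels, nor how the exponential Chebyshev step produces exactly the prefactor $1+\lambda\sum_{n=1}^N \E{}{X_n^2}$ and the decay $N^{-\lambda A C}$. These are precisely the points you label ``the delicate technical heart,'' and deferring them means the stated inequality is not derived. As it stands your proposal is a correct roadmap to Sakhanenko's theorem, but completing it requires essentially reproducing the technical machinery of \cite{sakhanenko:1984} (or of \cite{k-m-t:1976} adapted to non-identically distributed summands), which is absent here; in the context of the paper the appropriate move is the one the authors make, namely to cite the result.
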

In particular, under the assumptions of Theorem~\ref{thm:sakhanenko_coupling}, we can control the term $\pr{}{E_N^c}$ in the proof above as before. \\
Note that one can find $\lambda >0$ such that \eqref{ineq:sakhanenko_coupling} is satisfied if the $X_n$ are uniformly bounded or i.i.d.\ such that $\E{}{e^{\lambda_0 \abs{X_1}}} < \infty$ for some $\lambda_0 > 0$. Moreover, assume that \eqref{ineq:sakhanenko_coupling} holds for some $\lambda > 0$. Then 
\[
   \lambda \, \left(\E{}{X_n^2}\right)^{3/2} \leq \lambda \E{}{\abs{X_n}^3} \leq \lambda \E{}{e^{\lambda \abs{X_n}} \abs{X_n}^3} \leq \E{}{X_n^2},
\]
i.e.\ $0 < \lambda \leq \left( \E{}{X_n^2} \right)^{-1/2}$ for all $n$ implying that $(\E{}{X_n^2})_{n \geq 1}$ is necessarily bounded.
 \end{remark}
\subsection{Exponential weight functions}
In this section, we briefly comment on the case of an exponential weight function, i.e.\ $\sigma(n) = e^{\beta n}$ for some $\beta > 0$. The situation here is completely different compared to the polynomial case. \\
First of all, the rate of decay for the discretized process and for the continuous time process is not the same in general. This was observed already in the Brownian case where 
\[
   \pr{}{\sup_{0 \leq t \leq N} B(e^{\beta t}) \leq 0} \sim \frac{1}{\pi} \, e^{-\beta N /2}, \quad N \to \infty,
\]
in view of \eqref{eq:OU_sup_distr} and the fact that $(e^{-\beta t /2}B(e^{\beta t}))_{t \geq 0}$ is an Ornstein-Uhlenbeck process. In particular, for $\beta > 2 \log 2$, the decay is faster than $2^{-N}$ which is a universal lower bound in the discrete framework (cf. \eqref{eq:one_half_to_N}).\\
Secondly, the universality of the survival exponent that one observes in the polynomial case no longer persists even under the assumption of exponential moments as the following example shows. 
\begin{example}
   Let $\sigma(n) = \exp(\beta n)$ for some $\beta \geq \log 2$ and assume that $\pr{}{X_n = 1} = \pr{}{X_n = -1} =1/2$ for all $n$. Then for all $N \geq 1$
\begin{equation}\label{eq:exp_bernoulli_example}
   \sup_{n=1,\dots,N} Z_n \leq 0 \quad \Longleftrightarrow \quad X_1 = \dots = X_N = -1.
\end{equation}
The implication ``$\Leftarrow$'' is trivial. On the other hand, if $X_1 = \dots = X_{k-1} = -1$ and $X_k = 1$, for some $k \leq N$, then
\[
   Z_k = -\sum_{j=1}^{k-1} e^{\beta j} + e^{\beta k} = e^{\beta (k-1)} \frac{e^{\beta} - 2 + e^{\beta (2 - k)} }{e^\beta - 1} > 0
\]
since $\beta \geq \log 2$. This proves the implication ``$\Rightarrow$''. \\
Note that \eqref{eq:exp_bernoulli_example} implies that $\pr{}{\sup_{n=1,\dots,N} Z_n \leq 0} = 2^{-N} = \exp(- \log(2) \, N)$. If we consider $(B(e^{\beta n}))_{n \geq 0}$, the corresponding survival probability is strictly greater than $2^{-N}$ by Lemma \ref{lem:lower_bound_exp_BM}. To be very precise, we actually have to consider $(B_{\kappa(n)})_{n \geq 1}$ where 
\[
   \kappa(n) = \sum_{k=1}^n \sigma(n)^2 = e^{2\beta} \,  \frac{e^{2\beta n} - 1}{e^{2\beta} - 1}.
\]
In particular,
\[
   \pr{}{\sup_{n=1,\dots,N} B_{\kappa(n)} \leq 0} = \pr{}{\sup_{n=1,\dots,N} B(e^{2\beta} - 1) \leq 0}
\]
and the same arguments used in Lemma \ref{lem:lower_bound_exp_BM} show that 
\[
   \lim_{N \to \infty} \frac{1}{N} \log \pr{}{\sup_{n=1,\dots,N} B_{\kappa(n)} \leq 0} > - \log 2.
\]
\end{example}
\bibliographystyle{abbrvnat}
\bibliography{biblio}

\begin{thebibliography}{33}
\providecommand{\natexlab}[1]{#1}
\providecommand{\url}[1]{\texttt{#1}}
\expandafter\ifx\csname urlstyle\endcsname\relax
  \providecommand{\doi}[1]{doi: #1}\else
  \providecommand{\doi}{doi: \begingroup \urlstyle{rm}\Url}\fi

\bibitem[Aurzada and Dereich(2011+)]{aurzada-dereich:2009}
F.~Aurzada and S.~Dereich.
\newblock Universality of the asymptotics of the one-sided exit problem for
  integrated processes.
\newblock \emph{to appear in Ann. Henri Poincar\'{e}}, 2011+.

\bibitem[Beekman(1975)]{beekman:1977}
J.~Beekman.
\newblock Asymptotic distributions for the {O}rnstein-{U}hlenbeck process.
\newblock \emph{J. Appl. Probab.}, 12:\penalty0 107--114, 1975.

\bibitem[Bertoin(1996)]{bertoin:1996}
J.~Bertoin.
\newblock \emph{L\'{e}vy processes}.
\newblock Cambridge University Press, Cambridge (UK), 1996.

\bibitem[Bertoin(1998)]{bertoin:1998}
J.~Bertoin.
\newblock The inviscid burgers equation with {B}rownian initial velocity.
\newblock \emph{Comm. Math. Phys.}, 193\penalty0 (2):\penalty0 397--406, 1998.

\bibitem[Bramson(1978)]{bramson:1978}
M.~Bramson.
\newblock Maximal displacement of branching {B}rownian motion.
\newblock \emph{Comm. Pure Appl. Math}, 31:\penalty0 531--581, 1978.

\bibitem[Dembo and Gao(2011)]{dembo-gao:2011}
A.~Dembo and F.~Gao.
\newblock Persistence of iterated partial sums.
\newblock \emph{Preprint}, 2011.

\bibitem[Doney(2007)]{doney}
R.~A. Doney.
\newblock \emph{Fluctuation theory for {L}\'evy processes}, volume 1897 of
  \emph{Lecture Notes in Mathematics}.
\newblock Springer, Berlin, 2007.

\bibitem[Feller(1970)]{feller-vol2-1970}
W.~Feller.
\newblock \emph{An Introduction to Probability Theory and Its Applications,
  Vol. II, Second Edition}.
\newblock Wiley \& Sons, Inc., New York, 1970.

\bibitem[Goldman(1971)]{goldman:1971}
M.~Goldman.
\newblock On the first passage of the integrated {W}iener process.
\newblock \emph{Ann. Math. Statist.}, 42\penalty0 (6):\penalty0 2150--2155,
  1971.

\bibitem[Gradshteyn and Ryzhik(2000)]{gradshteyn-ryzhik:2000}
I.~Gradshteyn and M.~Ryzhik.
\newblock \emph{Table of integrals, series, and products. Sixth Edition.}
\newblock Academic Press, San Diego, 2000.

\bibitem[Grimmett and Stirzaker(2001)]{grimmett-stirzaker:2001}
G.~Grimmett and D.~Stirzaker.
\newblock \emph{One thousand exercises in probability}.
\newblock Oxford University Press, Oxford, 2001.

\bibitem[Isozaki and Watanabe(1994)]{isozaki-watanabe:1994}
Y.~Isozaki and S.~Watanabe.
\newblock An asymptotic formula for the {K}olmogorov diffusion and a refinement
  of {S}inai's estimates for the integral of {B}rownian motion.
\newblock \emph{Proc. Japan Acad. Ser. A Math.}, 70\penalty0 (9):\penalty0
  271--276, 1994.

\bibitem[Koml\'{o}s et~al.(1976)Koml\'{o}s, Major, and Tusn\'{a}dy]{k-m-t:1976}
J.~Koml\'{o}s, P.~Major, and G.~Tusn\'{a}dy.
\newblock An approximation of partial sums of independent {RV}'s, and the
  sample {DF}. {II}.
\newblock \emph{Probab. Theory Related Fields}, 34\penalty0 (1):\penalty0
  33--58, 1976.

\bibitem[Krasnosel'skii et~al.(1989)Krasnosel'skii, Lifshits, and
  Sobolev]{k-l-s:1989}
M.~Krasnosel'skii, J.~Lifshits, and A.~Sobolev.
\newblock \emph{Positive linear systems - the method of positive operators}.
\newblock Heldermann Verlag, Berlin, 1989.

\bibitem[Larralde(2004)]{larralde:2004}
H.~Larralde.
\newblock A first passage time distribution for a discrete version of the
  {O}rnstein-{U}hlenbeck process.
\newblock \emph{J. Phys. A}, 37:\penalty0 3759--3767, 2004.

\bibitem[Li and Shao(2002)]{li-shao:2002}
W.~Li and Q.~Shao.
\newblock A normal comparison inequality and its applications.
\newblock \emph{Probab. Theory Related Fields}, 122:\penalty0 494--508, 2002.

\bibitem[Li and Shao(2004)]{li-shao:2004}
W.~Li and Q.~Shao.
\newblock Lower tail probabilities for {G}aussian processes.
\newblock \emph{Annals of Probability}, 32\penalty0 (1A):\penalty0 216--242,
  2004.

\bibitem[Majumdar(1999)]{majumdar:1999}
S.~Majumdar.
\newblock Persistence in nonequlilibrium systems.
\newblock \emph{Current Science}, 77\penalty0 (3):\penalty0 370--375, 1999.

\bibitem[{McKean, Jr.}(1963)]{mckean:1963}
H.~{McKean, Jr.}
\newblock A winding problem for a resonator driven by a white noise.
\newblock \emph{J. Math. Kyoto Univ.}, 2:\penalty0 227--235, 1963.

\bibitem[Molchan(1999{\natexlab{a}})]{molchan:1999}
G.~Molchan.
\newblock On the maximum of a fractional {B}rownian motion: Probabilities of
  small values.
\newblock \emph{Teor. Veroyatn. Primen.}, 44\penalty0 (1):\penalty0 111--115,
  1999{\natexlab{a}}.

\bibitem[Molchan(1999{\natexlab{b}})]{molchan:1999a}
G.~Molchan.
\newblock Maximum of a fractional {B}rownian motion: Probabilities of small
  values.
\newblock \emph{Comm. Math. Phys.}, 205\penalty0 (1):\penalty0 97--111,
  1999{\natexlab{b}}.

\bibitem[Molchan and Khokhlov(2004)]{molchan-khokhlov:2004}
G.~Molchan and A.~Khokhlov.
\newblock Small values of the maximum for the integral of fractional {B}rownian
  motion.
\newblock \emph{J. Stat. Phys.}, 114\penalty0 (3-4):\penalty0 924--946, 2004.

\bibitem[Novikov and Kordzakhia(2008)]{novikov-kordzakhia:2008}
A.~Novikov and N.~Kordzakhia.
\newblock Martingales and first passage times of {$\rm AR(1)$} sequences.
\newblock \emph{Stochastics}, 80\penalty0 (2-3):\penalty0 197--210, 2008.

\bibitem[Obl\'{o}j(2004)]{obloj:2004}
J.~Obl\'{o}j.
\newblock The {S}korokhod embedding problem and its offspring.
\newblock \emph{Probab. Surv.}, 1:\penalty0 321--392, 2004.

\bibitem[Sakhanenko(1984)]{sakhanenko:1984}
A.~Sakhanenko.
\newblock Rate of convergence in the invariance principle for variables with
  exponential moments that are not identically distributed.
\newblock In \emph{Trudy Inst. Mat. SO AN SSSR 3, Nauka}, pages 4--49,
  Novosibirsk, 1984.

\bibitem[Sato(1977)]{sato:1977}
S.~Sato.
\newblock Evaluation of the first-passage time probability to a square root
  boundary for the {W}iener process.
\newblock \emph{J. Appl. Probab.}, 14:\penalty0 850--856, 1977.

\bibitem[Schaefer(1974)]{schaefer:1974}
H.~Schaefer.
\newblock \emph{Banach lattices and positive operators}, volume 215 of
  \emph{Die Grundlehren der mathematischen Wissenschaften in
  Einzeldarstellungen}.
\newblock Springer Verlag, New York, Heidelberg, Berlin, 1974.

\bibitem[Shao(1995)]{shao:1995}
Q.-M. Shao.
\newblock Strong approximation theorems for independent random variables and
  their applications.
\newblock \emph{J. Multivariate Anal.}, 52:\penalty0 107--130, 1995.

\bibitem[Simon(2007)]{simon:2007}
T.~Simon.
\newblock The lower tail problem for homogeneous functionals of stable
  processes with no negative jumps.
\newblock \emph{ALEA Lat. Am. J. Prob. Math. Stat.}, 3:\penalty0 165--179,
  2007.

\bibitem[Sinai(1992)]{sinai:1992}
Y.~Sinai.
\newblock Distribution of some functionals of the integral of a random walk.
\newblock \emph{Theoret. and Math. Phys.}, 90:\penalty0 219--241, 1992.

\bibitem[Slepian(1962)]{slepian:1962}
D.~Slepian.
\newblock The one-sided barrier problem for {G}aussian noise.
\newblock \emph{Bell System Techn. J.}, 41:\penalty0 463--501, 1962.

\bibitem[Uchiyama(1980)]{uchiyama:1980}
K.~Uchiyama.
\newblock Brownian first exit from and sojourn over one sided moving boundary
  and application.
\newblock \emph{Z. Wahrscheinlichkeitstheorie verw. Gebiete}, 54:\penalty0
  75--116, 1980.

\bibitem[Vysotsky(2010)]{vysotsky:2010}
V.~Vysotsky.
\newblock On the probability that integrated random walks stay positive.
\newblock \emph{Stochastic Proc. Appl.}, 120:\penalty0 1178--1193, 2010.

\end{thebibliography}
\end{document}